\providecommand{\email}[1]{\href{mailto:#1}{\nolinkurl{#1}}}
\setlist[enumerate,1]{label={(\roman*)}}
\setlist[enumerate,2]{label={(\alph*)}}
\setlist[enumerate,3]{label={(\Roman*)}}
\newcommand{\newsstheorem}[2]{
  \newaliascnt{#1}{dummy}
  \newtheorem{#1}[#1]{#2}
  \aliascntresetthe{#1}
  \expandafter\def\csname #1autorefname\endcsname{#2}
}
\numberwithin{dummy}{section}
\theoremstyle{plain}
  \theoremstyle{definition}
\theoremstyle{remark}
\newenvironment{eqnarr*}{\begin{IEEEeqnarray*}{rCl}}{\end{IEEEeqnarray*}\ignorespacesafterend}
\newcommand\RR{\mathbb{R}}
\newcommand\ZZ{\mathbb{Z}}
\newcommand\PP{\mathbb{P}}
\newcommand\EE{\mathbb{E}}
\newcommand\NN{\mathbb{N}}
\newcommand\CC{\mathbb{C}}
\newcommand\e{{\mathrm e}}
\newcommand\Ind{\mathbbm{1}}
\newcommand{\euler}[2]{{\displaystyle \left\langle {#1 \atop #2}\right\rangle}}
\newcommand\mathof[1]{{\operator@font#1}} \makeatother
\newcommand\iu{\mathof{i}}
\begin{document}

\title{ Counterbalancing steps at random \\ in a random walk}
\author{Jean Bertoin\footnote{ Institute of Mathematics, University of Zurich, Switzerland, \texttt{jean.bertoin@math.uzh.ch}}  }
\date{}
\maketitle 
\thispagestyle{empty}

\begin{abstract} 
{A random walk with counterbalanced steps is a process of partial sums $\check S(n)=\check X_1+ \cdots + \check X_n$ whose steps $\check X_n$ are given recursively as follows. For each $n\geq 2$, with a fixed probability $p$, $\check X_n$ is a new independent sample from some fixed law $\mu$, and with complementary probability $1-p$,  $\check X_n= -\check X_{v(n)}$ counterbalances a previous step, with $v(n)$ a uniform random pick from $\{1, \ldots, n-1\}$. We determine the asymptotic behavior of $\check S(n)$ in terms of $p$ and the first two moments of $\mu$. Our approach relies on a coupling with a reinforcement algorithm due to H.A. Simon, and on properties of random recursive trees and Eulerian numbers, which may be of independent interest. The method can be adapted to the situation where the step distribution $\mu$ belongs to the domain of attraction of a stable law.
 }
\newline  \vskip 1mm
{\normalfont \bfseries Keywords:}
Reinforcement, random walk, random recursive tree, Eulerian numbers, Yule-Simon model.\newline
\vskip 1mm
{\normalfont \bfseries Mathematics Subject Classification:}  60F05; 60G50;  05C05; 05A05.

\end{abstract}

\section{Introduction}\label{s:intro}

In short, the purpose of the present work is to investigate long time effects of an algorithm for counterbalancing steps in a random walk. As we shall first explain, our motivation stems from a nearest neighbor process on the integer lattice, known as the elephant random walk. 

The  elephant random walk is a stochastic process  with memory on  $\ZZ$, which records the trajectory of an elephant that makes steps with unit length  left or right at each positive integer time. It has been introduced by Sch\"utz and Trimper \cite{SchTr} and triggered a growing interest in the recent years; see, for instance, \cite{BaurBer, Bercu, ColGavSch1, ColGavSch2, KuMa, Kur}, and also \cite{Baur, BerLau2, Marco, Buart, GutStadt, Mitak} for related models.  The dynamics depend on a parameter $q\in[0,1]$ and can be described as follows. Let us assume that the first step of the elephant is a Rademacher variable, that is equals $+1$ or $-1$ with probability $1/2$.  For each time $n\geq 2$, the elephant remembers a step picked uniformly at random among those it made previously, and decides either to repeat it with probability $q$, or to make the opposite step with complementary probability $1-q$. Obviously, each step of the elephant has then  the Rademacher law, although the sequence of steps is clearly not stationary. 

Roughly speaking, it seems natural to generalize these dynamics and allow steps to have an arbitrary distribution on $\RR$, say $\mu$.  In this direction, K\"ursten \cite{Kur} pointed at an equivalent way of describing the dynamics of the elephant random walk which makes such generalization non trivial\footnote{ Note that merely replacing the Rademacher distribution for the first step of the elephant by $\mu$ would not be interesting, as one would then just get the evolution of the elephant random walk multiplied by some random factor with law $\mu$.}. Let $p\in[0,1]$, and imagine a walker who makes at each time a step which is either, with probability $p$, a new independent random variable with law $\mu$, or,  with probability $1-p$,  a repetition of one of his preceding steps picked uniformly at random. It is immediately checked that when $\mu$ is the Rademacher distribution,
then the walker follows the same dynamics as the elephant random walk with parameter $q=1-p/2$. When $\mu$ is an isotropic stable law, this is the model referred to as
the shark random swim by Businger \cite{Buart}, and more generally, when  $\mu$ is arbitrary, this is the step reinforced random walk that has been studied lately in e.g. \cite{Marco2, NRLP, Bescal, UNRBM}.

The model of K\"ursten yields an elephant random walk only with parameter $q\in[1/2,1]$; nonetheless  the remaining range can be obtained by a simple modification. Indeed, let again $p\in[0,1]$ and  imagine now an repentant walker who makes at each time a step which is either, with probability $p$,  a new independent random variable with law $\mu$, or,  with probability $1-p$, the \textit{opposite} of one of his previous steps picked uniformly at random.  When $\mu$ is the Rademacher distribution, we simply get
 the dynamics of the elephant random walk with parameter $q=p/2\in [0,1/2]$.

More formally, we consider a sequence $(X_n)$ of i.i.d. real random variables with some given law $\mu$ and a sequence  $(\varepsilon_n)_{n\geq 2}$ of i.i.d. Bernoulli variables with parameter $p\in[0,1]$, which we assume furthermore independent of $(X_n)$.
We construct a counterbalanced sequence $(\check X_n)$ by interpreting each $\{\varepsilon_n=0\}$ as a counterbalancing event and each  $\{\varepsilon_n=1\}$ as an innovation event. 
Specifically, we agree that $\varepsilon_1=1$ for definiteness and denote  the number of innovations after $n$ steps by
$${\mathrm i}(n) \coloneqq \sum_{j=1}^n\varepsilon_j\qquad\text{for }n\geq 1.$$
We introduce a sequence $(v(n))_{n\geq 2}$ of independent variables, where each $v(n)$ has the uniform distribution on $\{1, \ldots , n-1\}$, and which is also independent of $(X_n)$ and $(\varepsilon_n)$. We then define recursively
\begin{equation}\label{E:negreinf}
\check X_n \coloneqq\left\{
\begin{matrix}
- \check X_{v(n)}&\text{ if }\varepsilon_n=0, \\
X_{{\mathrm i}(n)}&\text{ if }\varepsilon_n=1.\\
\end{matrix}
\right.
\end{equation}
 Note that the same step can be counterbalanced several times, and also that certain steps counterbalance previous steps which in turn already counterbalanced earlier ones.  The process  
  $$\check S(n)\coloneqq \check X_1+ \cdots + \check X_n, \qquad n\geq 0$$ 
  which records the positions of the repentant walker as a function of time, is called here a random walk with counterbalanced steps.
 Note that for $p=1$, i.e. when no counterbalancing events occur, $\check S$ is just a usual random walk with i.i.d. steps.
 
 In short, we are interested in understanding how counterbalancing steps affects the asymptotic behavior of random walks.
 We first introduce some notation.  
 Recall that $\mu$ denotes the distribution of the first step $X_1=\check X_1$ and  write 
$$m_k\coloneqq \EE(X_1^k)=\int_\RR x^k \mu(\mathrm d x)$$ for the moment of order $k\geq 1$ of $X_1$ whenever  $X_1\in L^k(\PP)$.  
To start with, we point out that if  the first moment is finite, then the algorithm  \eqref{E:negreinf} yields 
the recursive equation
 $$\EE(\check S(n+1)) =pm_1 + (1-(1-p)/n) \EE(\check S(n)), \qquad n\geq 1,$$
with the initial condition $\EE(\check S(1))=m_1$. It follows easily that
 \begin{align*}\EE(\check S(n)) 
 &\sim \frac{p}{2-p}\, m_1 n\qquad \text{as }n\to \infty;
 \end{align*}
see e.g. Lemma 4.1.2  in \cite{Durrett}. Our first result about the ballistic behavior should therefore not come as a surprise.
\begin{proposition}\label{P0} Let $p\in[0,1]$. If  $X_1\in L^1(\PP)$, then  there is the convergence in probability 
$$\lim_{n\to \infty} \frac{\check S(n)}{n} = \frac{p}{2-p} m_1.$$
\end{proposition}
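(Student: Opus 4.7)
The plan is: given the first-moment asymptotic $\EE(\check S(n)) \sim pm_1 n/(2-p)$ already established in the excerpt, apply Chebyshev's inequality after controlling the variance. Since the proposition only assumes $m_1<\infty$, I would proceed in two steps: first derive $\mathrm{Var}(\check S(n)) = O(n)$ under the stronger hypothesis $m_2<\infty$, then remove that extra moment assumption by truncation.

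For the case $m_2<\infty$, I compute a recursion for $T_n \coloneqq \EE(\check S(n)^2)$ by conditioning on $(\varepsilon_{n+1}, v(n+1))$ in \eqref{E:negreinf}. Two algebraic inputs are decisive: the identity $\sum_{k=1}^n \EE(\check S(n)\check X_k) = T_n$, and the minus sign in $\check X_{n+1} = -\check X_{v(n+1)}$, which together yield a linear contraction
$$T_{n+1} = T_n\left(1 - \frac{2(1-p)}{n}\right) + 2pm_1 M_n + a_{n+1},$$
where $M_n \coloneqq \EE(\check S(n))$ and $a_n \coloneqq \EE(\check X_n^2)$. The same conditioning gives a closed recursion $a_{n+1} = pm_2 + (1-p)n^{-1}\sum_{k=1}^n a_k$, from which $(a_n)$ is immediately seen to be bounded. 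Squaring the mean recursion $M_{n+1} = pm_1 + (1-(1-p)/n)M_n$ and subtracting, the $\Theta(n^2)$ terms cancel exactly and the variance $V_n \coloneqq T_n - M_n^2$ satisfies
$$V_{n+1} = V_n\left(1 - \frac{2(1-p)}{n}\right) + O(1),$$
so $V_n = O(n)$ by a routine induction. Chebyshev then delivers the desired convergence in probability.

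To handle the general case, fix $A>0$ and write $X_k = X_k^{(A)} + R_k^{(A)}$ with $X_k^{(A)} \coloneqq X_k \Ind_{|X_k|\leq A}$. Using the same $(\varepsilon_n)$ and $(v(n))$, define counterbalanced walks $\check S^{(A)}$ and $\check R^{(A)}$ from $(X_k^{(A)})$ and $(R_k^{(A)})$ respectively; since \eqref{E:negreinf} is linear in its inputs, $\check S = \check S^{(A)} + \check R^{(A)}$. The previous step applies to $\check S^{(A)}$ and yields $\check S^{(A)}(n)/n \to pm_1^{(A)}/(2-p)$ in probability, where $m_1^{(A)} \coloneqq \EE(X_1^{(A)})$. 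For the remainder, repeating the first-moment computation of the excerpt with absolute values in place of the signed values gives $\EE[|\check R_j^{(A)}|] = \EE[|R_1^{(A)}|]$ for every $j$, hence $\EE[|\check R^{(A)}(n)|] \leq n\,\EE[|R_1^{(A)}|]$, which becomes arbitrarily small relative to $n$ once $A$ is large. Combined with $m_1^{(A)} \to m_1$, a standard three-$\varepsilon$ argument concludes.

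The only place where calculation cannot be avoided is the exact cancellation of the $\Theta(n^2)$ terms in the variance recursion. This cancellation is not accidental: it reflects the fact that the drift in the mean equation and the contraction in the second-moment equation both originate from the same uniform-resampling mechanism, and it is what ultimately powers the concentration of $\check S(n)/n$ around its deterministic limit.
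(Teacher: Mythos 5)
Your proof is correct, and it takes a genuinely different route from the paper's. The paper establishes Proposition~\ref{P0} through the coupling with Simon's algorithm: writing $\check S(n)=\sum_j \Delta(T_j(n))X_j$ over the genealogical trees, Lemma~\ref{L7} shows that the ballistic term comes entirely from the variables used exactly once, whose count $\nu_1(n)\sim pn/(2-p)$ by the Yule--Simon asymptotics, while the contribution of variables used $k\geq 2$ times vanishes because $X_1\Delta(\mathbb T_k)$ is symmetric (Corollary~\ref{C0}(i)) and a uniform-integrability bound (Lemma~\ref{L6}) controls the tail of the sum over $k$. You instead run a direct second-moment computation, and the pieces check out: conditioning on $(\varepsilon_{n+1},v(n+1))$ does give $T_{n+1}=T_n\left(1-\tfrac{2(1-p)}{n}\right)+2pm_1M_n+a_{n+1}$ with the cross term $-2(1-p)T_n/n$ coming precisely from $\sum_k\EE(\check S(n)\check X_k)=T_n$ and the minus sign; your recursion for $a_n$ in fact forces $a_n\equiv m_2$; subtracting $M_{n+1}^2$ leaves only $O(1)$ forcing because $M_n=O(n)$, so $V_n=O(n)$ and Chebyshev applies; and the truncation step is legitimate because \eqref{E:negreinf} is linear in the input sequence and the one-step conditioning that produces the mean recursion equally gives $\EE|\check R^{(A)}_j|=\EE|R^{(A)}_1|$ for all $j$. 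Your argument is more elementary and self-contained---no random recursive trees, no Eulerian numbers---and it yields the quantitative bound $\mathrm{Var}(\check S(n))=O(n)$ under a second moment, consistent with the diffusive scaling of Theorem~\ref{T2}. What it does not provide is the structural information the paper is really after: the identification of \emph{which} steps drive the velocity and the decomposition $\check S=\sum_k\check S_k$, which is exactly the scaffolding reused for the central limit theorem and for the stable version in Theorem~\ref{T3}. Two cosmetic points: the contraction factor $1-2(1-p)/n$ is negative at $n=1$ when $p<1/2$, so start the $V_n\leq Kn$ induction at $n=2$; and in the three-$\varepsilon$ step you should bound $\PP(|\check R^{(A)}(n)|>\delta n)$ by Markov using $\EE|\check R^{(A)}(n)|\leq n\,\EE|R^{(A)}_1|$, choosing $A$ so that $\EE|R^{(A)}_1|$ is small compared with $\delta$.
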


We see in particular that counterbalancing steps reduces the asymptotic velocity of a random walk by a factor $p/(2-p)<1$.  The velocity is smaller when the innovation rate $p$ is smaller
(i.e. when counterbalancing events have a higher frequency),
and vanishes as $p$ approaches $0+$. 

The main purpose of this work is to establish the asymptotic normality when $\mu$ has a finite second moment. 

\begin{theorem}\label{T2} Let $p\in(0,1]$.  If $X_1\in L^2(\PP)$, then there is the convergence in distribution
$$\lim_{n\to \infty} \frac{\check S(n)-\frac{p}{2-p}m_1n }{\sqrt n} = \mathcal N\left(0, \frac{m_2 -\left(\frac{p}{2-p}m_1\right)^2 }{3-2p} \right) ,$$
where the right-hand side denotes a centered Gaussian variable parametrized by mean and variance.
\end{theorem}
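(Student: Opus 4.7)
The plan is to exploit the forest coupling alluded to in the abstract, express $\check S(n)$ as a weighted sum of the innovation samples, and then combine a CLT for these weights with a conditional Lindeberg--Feller argument. First, I would declare each node $n$ with $\varepsilon_n=1$ to be the root of a new tree, and each node with $\varepsilon_n=0$ to be the child of $v(n)$. This produces a random recursive forest $\mathcal F_n$ on $\{1,\dots,n\}$ with ${\mathrm i}(n)$ trees $T_1,\dots,T_{{\mathrm i}(n)}$, where $T_k$ is rooted at the $k$-th innovation node and inherits the sample $X_k$. Iterating \eqref{E:negreinf} along the unique path from $i$ up to its root gives $\check X_i = (-1)^{d(i)} X_{k(i)}$, with $d(i)$ the depth of $i$ and $k(i)$ the innovation index of its root. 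Grouping by tree then yields
$$\check S(n) = \sum_{k=1}^{{\mathrm i}(n)} s_k\,X_k,\qquad s_k \coloneqq \sum_{i\in T_k}(-1)^{d(i)},$$
and the sequence $(X_k)$ is independent of $\mathcal F_n$ by construction.

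Setting $S^\star(n)\coloneqq \sum_{i=1}^n (-1)^{d(i)} = \sum_k s_k$ and $\Sigma(n)\coloneqq \sum_k s_k^2$, I would split
$$\check S(n) - m_1\,S^\star(n) = \sum_{k=1}^{{\mathrm i}(n)} s_k(X_k - m_1) \eqqcolon Y(n).$$
Conditionally on $\mathcal F_n$, $Y(n)$ is a sum of independent centered random variables with variance $(m_2-m_1^2)\,\Sigma(n)$, while $m_1 S^\star(n)$ is a functional of $\mathcal F_n$ alone. The theorem will therefore follow from three ingredients: (a) a law of large numbers $\Sigma(n)/n \to 1/(3-2p)$ in probability, (b) a central limit theorem stating that $(S^\star(n) - \tfrac{p}{2-p}n)/\sqrt n$ converges in distribution to some $\mathcal N(0,\alpha)$, and (c) a conditional Lindeberg--Feller argument combining these into the sought Gaussian limit for $\check S(n)$.

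To carry out (a) and (b), I would analyse the evolution of $(S^\star(n),\Sigma(n))$ under the one-step rule: with probability $p$ a new isolated root of signed size $+1$ is appended, while with probability $1-p$ a uniformly chosen existing node acquires a child, flipping the sign contribution at that node. This yields recursions of the form
$$\EE[S^\star(n+1)\mid \mathcal F_n] = S^\star(n) + p - (1-p)\,\frac{S^\star(n)}{n},$$
from which $\EE[S^\star(n)]\sim \tfrac{p}{2-p}n$ follows immediately, together with a parallel computation producing $\EE[\Sigma(n)]\sim n/(3-2p)$ and variance bounds of order $n$ for both quantities. The deeper input is the distribution of the depth profile of a random recursive tree, which is encoded by Eulerian numbers; these identities should supply the exact value of $\alpha$, the $L^2$ control needed to upgrade the mean asymptotics of $\Sigma(n)$ to convergence in probability, and the martingale structure yielding the CLT for $S^\star(n)$.

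Given (a), the conditional variance $(m_2-m_1^2)\Sigma(n)/n$ converges in probability to the deterministic constant $(m_2-m_1^2)/(3-2p)$; together with a negligibility bound on $\max_k |s_k|/\sqrt n$, which follows from the sublinear growth of typical cluster sizes in the Yule--Simon model, a conditional Lindeberg--Feller theorem yields $Y(n)/\sqrt n \to \mathcal N(0,(m_2-m_1^2)/(3-2p))$ in distribution, asymptotically independently of $\mathcal F_n$ since the limit is non-random. Adding the independent Gaussian contribution $m_1(S^\star(n) - \tfrac{p}{2-p}n)/\sqrt n$ and simplifying via the identity $4(1-p) - (2-p)^2 = -p^2$ reproduces exactly the variance $(m_2 - (\tfrac{p}{2-p}m_1)^2)/(3-2p)$ in the statement. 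The main obstacle is step 3: extracting the precise constants and the CLT for $S^\star(n)$ requires a careful analysis of the signed depth profile of a random recursive forest, and this is precisely where the Eulerian-number machinery announced in the abstract is indispensable.
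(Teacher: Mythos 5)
Your decomposition is correct and genuinely different from the paper's. You split $\check S(n)=m_1S^\star(n)+\sum_k s_k(X_k-m_1)$, i.e.\ into the forest functional $S^\star(n)=\sum_{i\le n}(-1)^{d(i)}$ plus a conditionally centered part, and then need (a) $\Sigma(n)/n\to 1/(3-2p)$, (b) a CLT for $S^\star(n)$ with variance $4(1-p)/((3-2p)(2-p)^2)$, and (c) a conditional Lindeberg argument. The variance bookkeeping is right: your identity $4(1-p)=(2-p)^2-p^2$ does recombine $(m_2-m_1^2)/(3-2p)+m_1^2\alpha$ into the stated $\bigl(m_2-(\tfrac{p}{2-p}m_1)^2\bigr)/(3-2p)$, and one can check $\EE[\Sigma(n)]/n\to 1/(3-2p)$ using $\EE(\Delta(\mathbb T_k)^2)=k/3$ for $k\ge 3$ together with the corrections from $k=1,2$ and the Yule--Simon frequencies. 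The paper instead decomposes by tree size, $\check S(n)=\sum_k\check S_k(n)$ (Lemmas \ref{L8} and \ref{L9}): since $\Delta(\mathbb T_k)$ is centered for $k\ge 2$, the only place where the fluctuation of the forest itself enters is through $\nu_1(n)$, whose CLT is imported from a P\'olya-urn result (Lemma \ref{L5}); everything else reduces to classical CLTs for i.i.d.\ sums evaluated at random times. The trade-off is that your route gives a cleaner conditional-variance picture but concentrates all the difficulty into the single functional $S^\star(n)$, whereas the paper never has to control the joint fluctuations of the signed depth profile across the whole forest.

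The genuine gap is step (b). The CLT for $S^\star(n)$ \emph{is} the theorem in the special case $\mu=\delta_1$ (where $m_1^2=m_2=1$ and the stated variance is exactly $4(1-p)/((3-2p)(2-p)^2)$), so you cannot simply invoke it; and it is not literally the elephant random walk either, because your own drift recursion $\EE[S^\star(n+1)\mid\mathcal F_n]=S^\star(n)+p-(1-p)S^\star(n)/n$ has a constant term $p$ that the ERW with Rademacher first step does not. Writing ``the Eulerian-number machinery should supply the exact value of $\alpha$ and the martingale structure'' defers precisely the hardest computation. The fix is standard but must be done: set $a_n=\prod_{j=1}^{n-1}(1-(1-p)/j)^{-1}\sim cn^{1-p}$, check that $a_n(S^\star(n)-b_n)$ is a martingale for suitable $b_n$ with $b_n\sim \tfrac{p}{2-p}n$, verify the Lindeberg condition and compute the bracket to extract $\alpha$ (the Eulerian numbers are not actually needed here; they enter the paper only through $\Delta(\mathbb T_k)$ inside single trees, via Corollary \ref{C0}). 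Two smaller deferred points: the upgrade of $\EE[\Sigma(n)]/n\to 1/(3-2p)$ to convergence in probability needs a second-moment or truncation argument of the type in Lemmas \ref{L6} and \ref{L9}; and the negligibility of $\max_k|s_k|/\sqrt n$ needs the moment bound $\EE(|\Delta(\mathbb T_k)|^{2\beta})\lesssim k^\beta$ rather than a bare appeal to ``sublinear cluster sizes,'' since the largest tree has size of order $n^{1-p}$ and it is only the square-root cancellation within it that saves you.
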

It is interesting to observe that the variance of the Gaussian limit depends linearly on the square $m_1^2$ of the first moment  and the second moment $m_2$ of $\mu$ only, although not just on the variance $m_2-m_1^2$ (except, of course, for $p=1$). 
Furthermore, it is not  always a monotonous function\footnote{  For instance, in the simplest case when $\mu$ is a Dirac point mass, i.e. $m_2=m_1^2$, then
 the variance is given by $\frac{4(1-p)m_2}{(3-2p)(2-p)^2}$ and reaches its maximum for 
$p=(9-\sqrt{17})/8\approx 0.6$. At the opposite, when $\mu$ is centered, i.e. $m_1=0$, the variance is given by $m_2/(3-2p)$ and  hence increases with $p$.} of the innovation rate $p$, and does not vanish when $p$ tends to $0$ either. 

Actually, our proofs of Proposition \ref{P0} and Theorem \ref{T2}  provide a much finer analysis than what is encapsulated  by those general statements.
Indeed, we shall  identify  the main actors for the evolution of $\check S$ and their respective contributions to its asymptotic behavior.
In short, we shall see that the ballistic behavior stems from those of the variables $X_j$ that have been used just once by the algorithm \eqref{E:negreinf}  (in particular, they have not yet been counterbalanced), whereas the impact of variables that occurred twice or more regardless of their signs $\pm$, is asymptotically negligible as far as only velocity is concerned. Asymptotic normality is more delicate to analyze. We shall show that, roughly speaking, it results from the combination of, on the one hand, the central limit theorem for certain centered random walks, and on the other hand, Gaussian fluctuations for the asymptotic frequencies of some pattern induced by \eqref{E:negreinf}.

Our analysis  relies on a natural coupling of the counterbalancing algorithm \eqref{E:negreinf}  with
a basic linear reinforcement
algorithm which has been  introduced a long time ago by H.A. Simon \cite{Simon} to explain the occurrence of certain heavy tailed distributions in a variety of empirical data. Specifically, Simon defined recursively a sequence denoted here by $(\hat X_n)$ (beware of the difference of notation between $\hat X$ and $\check X$) via
\begin{equation}\label{E:reinf}
\hat X_n \coloneqq \left\{
\begin{matrix}
\hat X_{v(n)}&\text{ if }\varepsilon_n=0, \\
X_{{\mathrm i}(n)}&\text{ if }\varepsilon_n=1.\\
\end{matrix}
\right.
\end{equation}
We stress that the same Bernoulli variables $\varepsilon_n$ and the same uniform variables  $v(n)$ are used to run both Simon's algorithm \eqref{E:reinf} and \eqref{E:negreinf}; in particular either $\check X_n= \hat X_n$ or $\check X_n= -\hat X_n$. 
It might then seem natural to refer to \eqref{E:negreinf} and \eqref{E:reinf} respectively as  negative and positive  reinforcement algorithms. However, in the literature, negative reinforcement usually refers to a somehow different notion  (see e.g. \cite{Pem}), and we shall thus avoid using this terminology.

A key observation is that  \eqref{E:negreinf}  can be recovered from \eqref{E:reinf} as follows. Simon's algorithm naturally encodes a genealogical forest with set of vertices $\NN=\{1, 2,\ldots,\}$ and edges $(v(j),j)$ for all $j\geq 2$ with $\varepsilon_j=0$; see the forthcoming Figure 1.
Then $\check X_n=\hat X_n$ if the vertex $n$  belongs to an even generation of its tree component, and $\check X_n=-\hat X_n$ if $n$ belongs to an odd generation.
On the other hand, the statistics of Simon's genealogical forest can be described in terms of independent random recursive trees (see e.g. Chapter 6 in \cite{Drmota} for background) conditionally on their sizes. 
This leads us to investigate the difference $\Delta(\mathbb T_k)$ between the number of vertices at even generations and the number of vertices at odd generations in a random recursive tree  $\mathbb T_k$ of size $k\geq 1$. The law of $\Delta(\mathbb T_k)$ can be expressed in terms of Eulerian numbers, and properties of the latter enable us either to compute explicitly or estimate certain quantities which are crucial for the proofs of Proposition \ref{P0} and Theorem \ref{T2}.  

It is interesting to compare asymptotic behaviors for counterbalanced steps  with those for reinforced steps. If we write $\hat S(n)=\hat X_1+ \cdots + \hat X_n$ for the random walk with  reinforced steps, then it is known that  the law of large numbers holds for
$\hat S$, namely $\hat S(n)/n\to m_1$ in $L^1$  when $\int_{\RR}|x|\mu(\mathrm{d}x)<\infty$, independently of the innovation parameter $p$. Further, regarding fluctuations when $\int_{\RR}|x|^2\mu(\mathrm{d}x)<\infty$, a phase transition occurs for the critical parameter $p_c=1/2$, in the sense that 
$\hat S$ is diffusive for $p>1/2$ and superdiffusive for $p<1/2$; see \cite{Bescal, UNRBM}. Despite of the natural coupling between \eqref{E:negreinf} and \eqref{E:reinf}, 
there are thus major differences\footnote{  This should not come as a surprise. In the simplest case when $\mu=\delta_1$ is the Dirac mass at $1$, one has $\hat S(n)\equiv n$ whereas $\check S$ is a truly stochastic process, even for $p=0$ when there is no innovation.} between the asymptotic behaviors of $\check S$ and of $\hat S$: 
Proposition \ref{P0} shows that the asymptotic speed of $\check S$ depends on $p$, and Theorem 
 \ref{T2}  that there is no such phase transition for counterbalanced steps and  $\check S$ is always diffusive. 
 
The phase transition for step reinforcement when $\mu$ has a finite second moment can be explained  informally as follows; for the sake of simplicity, suppose also that $\mu$ is centered, i.e. $m_1=0$.  There are $\mathrm i(n)\sim pn$ trees in Simon's genealogical forest, which are overwhelmingly microscopic (i.e. of size $O(1)$), whereas only a few trees reach the size $O(n^{1-p})$. Because $\mu$ is centered,  the contribution of microscopic trees to $\hat S(n)$ is of order $\sqrt n$, and that of the few largest trees of order $n^{1-p}$. This is the reason  why $\hat S(n)$ grows like $\sqrt n \gg n^{1-p}$ when $p>1/2$, and rather like $n^{1-p}\gg \sqrt n$ when $p<1/2$. For counterbalanced steps, we will see that, due to the counterbalancing mechanism, the contribution of a large tree with size $\ell\gg 1$ is now only of order $\sqrt \ell$. As a consequence, the contribution to $\check S(n)$ of the largest trees of Simon's genealogical forest  is only of order $O(n^{(1-p)/2})$. This is always much smaller than the contribution  of microscopic trees which remain of order $\sqrt n$. 
 We further stress that, even though only the sizes of the trees in Simon's genealogical forest are relevant for the analysis of the random walk $\hat S$ with  reinforced steps, the study of the random walk $\check S$ with counterbalanced steps is more complex and requires informations on the fine structure of those trees, not merely their sizes.

The rest of this text is organized as follows. Section 2 focusses on the purely counterbalanced case $p=0$. 
In this situation, for each fixed $n\geq 1$, the distribution of $\check S(n)$ can be expressed explicitly in terms of Eulerian numbers.
Section 3 is devoted to the coupling between the counterbalancing algorithm \eqref{E:negreinf} and H.A. Simon's algorithm \eqref{E:reinf}, and to the interpretation of the former in terms of a forest of random recursive trees induced by the latter. Proposition \ref{P0} and Theorem \ref{T2} are proved in Section 4, where we analyze more finely the respective contributions of some natural sub-families. Last, in Section 5, we present a stable version of Theorem \ref{T2} when $\mu$ belongs to the domain of attraction (without centering) of an $\alpha$-stable distribution for some $\alpha\in(0,2)$. 
 
\section{Warm-up: the purely counterbalanced case}
This section is devoted to the simpler situation\footnote{ Observe that this case without innovation has been excluded in Theorem \ref{T2}.} when $p=0$. So $\varepsilon_n\equiv 0$ for all $n\geq 2$, meaning that every step, except of course the first one, counterbalances some preceding step. The law $\mu$ then only plays a superficial role as it is merely relevant for the first step. For the sake of simplicity, we further focus on the case when $\mu=\delta_1$ is the Dirac mass at $1$.  

The dynamics are entirely encoded by the sequence $(v(n))_{n\geq 2}$ of independent uniform variables on $\{1,\ldots, n-1\}$; more precisely the purely counterbalanced sequence of bits is given by
\begin{equation}\label{E:p=0}
\check X_1= 1 \quad \text{and} \quad \check X_n=-\check X_{v(n)}\quad \text{for all }n\geq 2.
\end{equation}
The random algorithm \eqref{E:p=0} points at a convenient representation in terms of random recursive trees. Specifically, the sequence $(v(n))_{n\geq 2}$ encodes a random tree ${\mathbb T}_{\infty}$ with set of vertices $\NN$
 and set of edges $\{(v(n),n): n\geq 2\}$. Roughly speaking,  ${\mathbb T}_{\infty}$ is constructed recursively by incorporating vertices one after the other 
and creating an edge between each new vertex $n$ and its parent $v(n)$ which is picked uniformly at random in $\{1, \ldots, n-1\}$ and independently of the other vertices.
  If we view $1$ as the root of ${\mathbb T}_{\infty}$ and call a vertex $j$ \textit{odd} (respectively, \textit{even}) when its generation (i.e. its distance to the root in ${\mathbb T}_{\infty}$)
 is an odd (respectively, even) number, then 
 $$\check X_n= \left\{ \begin{matrix} 1 & \text{ if $n$ is an even vertex in $\mathbb T_{\infty}$,}\\
 -1 & \text{ if $n$ is an odd vertex in $\mathbb T_{\infty}$.}
 \end{matrix} \right.
 $$
 
 Let us now introduce some notation with that respect. For every $n\geq 1$, we write ${\mathbb T}_n$ for the restriction of ${\mathbb T}_{\infty}$
 to the set of vertices $\{1,\ldots, n\}$ and refer to  ${\mathbb T}_n$ as a random recursive tree with size $n$. We also write
 $\mathrm{Odd}({\mathbb T}_n)$ (respectively, $\mathrm{Even}({\mathbb T}_n)$) for the number  of odd (respectively, even) vertices in ${\mathbb T}_n$
 and set
  $$\Delta({\mathbb T}_n) \coloneqq \mathrm{Even}({\mathbb T}_n) - \mathrm{Odd}({\mathbb T}_n) = n-2 \mathrm{Odd}({\mathbb T}_n).$$
Of course, we can also express 
$$\Delta({\mathbb T}_n)= \check X_1+ \cdots + \check X_n,$$
which is the trajectory of an elephant full of regrets (i.e. for the parameter $q=0$).  

The main observation of this section is that  law of the number of odd vertices  is readily expressed in terms of the Eulerian numbers. Recall that
$\langle{}^n_k\rangle$ denotes the number of permutations $\varsigma$ of $\{1, \ldots,n\}$  with $k$ descents, i.e. such that $\#\{1\leq j<n: \varsigma(j) > \varsigma(j+1)\}=k$. Obviously $\langle{}^n_k\rangle\geq 1$ if and only if  $0\leq k <n$, 
and 
one has 
$$\sum_{k=0}^{n-1} \euler{n}{k}=n!.$$
The linear recurrence equation
\begin{equation}\label{E:recurEuler}
\euler{n}{k}=(n-k)\euler{{n-1}}{{k-1}}+(k+1)\euler{{n-1}}{k}
\end{equation}
is easily derived from a recursive construction of permutations  (see Theorem 1.3 in \cite{Peter}); we also mention the explicit formula (see Corollary 1.3 in \cite{Peter})
$$\euler{n}{k}=\sum _{j=0}^{k}(-1)^{j}{\binom {n+1}{j}}(k+1-j)^{n}.$$

\begin{lemma}\label{L1} For every $n\geq 1$, we have   
$$\PP(\mathrm{Odd}({\mathbb T}_n)=\ell)= \frac{1}{(n-1)! } \euler{{n-1}}{{\ell-1}},$$
with the convention\footnote{ Note that this convention is in agreement  with the linear recurrence equation \eqref{E:recurEuler}.} that $\langle{}^{\ 0}_{-1}\rangle=1$ in the right-hand side for $n=1$ and $\ell=0$.
\end{lemma}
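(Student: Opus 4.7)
The plan is to proceed by induction on $n$, exploiting the recursive construction of $\mathbb{T}_{n+1}$ from $\mathbb{T}_n$ by attaching vertex $n+1$ to a uniformly chosen parent in $\{1,\ldots,n\}$, and matching the resulting recurrence for $\PP(\mathrm{Odd}(\mathbb{T}_n)=\ell)$ with the defining recurrence \eqref{E:recurEuler} for Eulerian numbers.

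For the base case $n=1$, the tree consists only of the root and has no odd vertex, so $\PP(\mathrm{Odd}(\mathbb{T}_1)=0)=1$, which agrees with $\langle{}^{\ 0}_{-1}\rangle/0!=1$ under the stated convention. For the inductive step, I would condition on $\mathbb{T}_n$ and observe the following dichotomy: given $\mathrm{Odd}(\mathbb{T}_n)=\ell$, the tree has $\ell$ odd and $n-\ell$ even vertices, and the new vertex $n+1$, whose generation is one more than that of its parent $v(n+1)$, is odd if and only if $v(n+1)$ is even. Since $v(n+1)$ is uniform on $\{1,\ldots,n\}$ and independent of $\mathbb{T}_n$, this yields
\begin{equation*}
\PP(\mathrm{Odd}(\mathbb{T}_{n+1})=\ell)=\frac{\ell}{n}\,\PP(\mathrm{Odd}(\mathbb{T}_n)=\ell)+\frac{n-\ell+1}{n}\,\PP(\mathrm{Odd}(\mathbb{T}_n)=\ell-1).
\end{equation*}

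Substituting the inductive hypothesis $\PP(\mathrm{Odd}(\mathbb{T}_n)=\ell)=\langle{}^{n-1}_{\ell-1}\rangle/(n-1)!$ into both terms and pulling out a factor of $1/n!$, the right-hand side becomes $\frac{1}{n!}\bigl[\ell\,\langle{}^{n-1}_{\ell-1}\rangle+(n-\ell+1)\,\langle{}^{n-1}_{\ell-2}\rangle\bigr]$, which is exactly $\langle{}^{n}_{\ell-1}\rangle/n!$ by the Eulerian recurrence \eqref{E:recurEuler} applied with $k=\ell-1$. This closes the induction and yields the claim.

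I do not expect any serious obstacle: the only point requiring a bit of care is the bookkeeping of boundary values, namely checking that the identity still holds when $\ell=0$ or $\ell=n+1$ (in which one of the two terms vanishes trivially), and that the convention $\langle{}^{\ 0}_{-1}\rangle=1$ together with the usual convention $\langle{}^{n}_{k}\rangle=0$ for $k<0$ or $k\geq n$ makes the Eulerian recurrence valid at the boundaries so that the base case and the inductive step fit together seamlessly.
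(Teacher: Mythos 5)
Your proposal is correct and follows essentially the same route as the paper: both derive the one-step recurrence $\PP(\mathrm{Odd}(\mathbb{T}_{n+1})=\ell)=\frac{\ell}{n}\PP(\mathrm{Odd}(\mathbb{T}_n)=\ell)+\frac{n+1-\ell}{n}\PP(\mathrm{Odd}(\mathbb{T}_n)=\ell-1)$ from the parity of the uniformly chosen parent, and then identify it with the Eulerian recurrence \eqref{E:recurEuler} (the paper does this by setting $A(n,k)=n!\,\PP(\mathrm{Odd}(\mathbb{T}_{n+1})=k+1)$ and iterating from $A(1,0)=1$, which is just your induction in different clothing).
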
 
\begin{proof}   Consider $n\geq 1$ and note from the very construction of random recursive trees that there is the identity
$$  \PP(\mathrm{Odd}({\mathbb T}_{n+1})=\ell) = \frac{\ell}{n}\, \PP(\mathrm{Odd}({\mathbb T}_n)=\ell) +  \frac{n+1-\ell}{n}\, \PP(\mathrm{Odd}({\mathbb T}_n)=\ell-1).$$
Indeed,  the first term of the sum in the right-hand side accounts for the event that the parent $v(n+1)$ of the new vertex $n+1$ is an odd vertex (then $n+1$ is an even vertex), and the second term for the event that  $v(n+1)$ is an even vertex (then $n+1$ is an odd vertex). 

In terms of  $A(n,k) \coloneqq n! \PP(\mathrm{Odd}({\mathbb T}_{n+1})=k+1)$, this yields 
$$ A(n,k) = (k+1) A(n-1,k) + (n-k) A(n-1,k-1),$$
which is the linear recurrence equation \eqref{E:recurEuler} satisfied by the Eulerian numbers. Since plainly $A(1,0)=\PP(\mathrm{Odd}(2)=1)=1=\langle{}^1_0\rangle$,
we conclude by iteration that $A(n,k)=\langle{}^n_k\rangle$ for all $n\geq 1$ and $0\leq k<n$. Last, the formula in the statement also holds for $n=1$ since $\mathrm{Odd}(1)=0$. 
\end{proof}
\begin{remark} Lemma \ref{L1} is implicit in  Mahmoud \cite{Mahmoud}\footnote{ Beware however that the definition of Eulerian numbers in \cite{Mahmoud} slightly differs from ours, namely 
$\langle{}^n_k\rangle$ there corresponds to $\langle{}^{\ n}_{k-1}\rangle$ here.}. Indeed $\mathrm{Odd}({\mathbb T}_n)$ can be viewed as the number of blue balls in an analytic Friedman's urn model
started with one white ball and replacement scheme $\left({}^{0\ 1}_{1\ 0}\right)$; see Section 7.2.2 in \cite{Mahmoud}. In this setting, Lemma \ref{L1} is equivalent to 
the formula for the number of white balls at the bottom of page 127 in \cite{Mahmoud}. Mahmoud relied on the analysis of the differential system associated to the replacement scheme via a Riccati differential equation and inversion of generating functions. The present approach based on the linear recurrence equation \eqref{E:recurEuler} is more direct. 
Lemma \ref{L1} is also a closed relative to a result due to Najock and Heyde \cite{NaHeyde} (see also Theorem 8.6 in Mahmoud \cite{Mahmoud} and Section 6.2.4 in Drmota \cite{Drmota}) which states that  the number of leaves in  a random recursive tree with size $n$ has the same distribution as that  appearing in Lemma \ref{L1}. 
\end{remark}
We next point at a useful identity related to Lemma \ref{L1} which goes back to Laplace (see Exercise 51 of Chapter I in Stanley \cite{Stanley}) and is often attributed to Tanny \cite{Tanny}.   For every $n\geq 0$, there is the identity in distribution
\begin{equation}\label{E:tanny}
\mathrm{Odd}({\mathbb T}_{n+1})\,{\overset{\mathrm{(d)}}{=}}\, \lceil U_1+\cdots + U_n\rceil,
\end{equation}
where in the right-hand side, $U_1, U_2, \ldots$ is a sequence of i.i.d. uniform variables on $[0,1]$ and $\lceil \cdot \rceil$ denotes  the ceiling function.
We now record for future use the following consequences.
\begin{corollary}\label{C0}
\begin{itemize}
\item[(i)] For every $n\geq 2$, the variable $\Delta({\mathbb T}_n)$ is symmetric, i.e. $\Delta({\mathbb T}_n)\,{\overset{\mathrm{(d)}}{=}} \, -\Delta({\mathbb T}_n)$,
and in particular $\EE(\Delta({\mathbb T}_n))= 0$.
\item[(ii)] For all $n\geq 3$, one has $ \EE(\Delta({\mathbb T}_n)^2)= n/3$.
\item[(iii)]  For all $n\geq 1$, one has $\EE(|\Delta({\mathbb T}_n)|^4) \leq 6n^2$.
\end{itemize}
\end{corollary}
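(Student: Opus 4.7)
The plan is to prove (i) directly from the Tanny identity \eqref{E:tanny}, and then to derive (ii) and (iii) from the Markov chain structure of $(\Delta(\mathbb{T}_n))_{n \geq 1}$ furnished by the recursive construction of random recursive trees. For (i), set $S \coloneqq U_1 + \cdots + U_{n-1}$ with $n \geq 2$. Since $1 - U_i$ is uniform on $[0,1]$, we have $S \stackrel{\mathrm{(d)}}{=} (n-1) - S$. Because $S$ has a continuous distribution, $S \notin \mathbb{Z}$ almost surely, so $\lceil S \rceil = \lfloor S \rfloor + 1$ a.s., whence
\[
\lceil S \rceil \stackrel{\mathrm{(d)}}{=} \lceil (n-1) - S \rceil = (n-1) - \lfloor S \rfloor = n - \lceil S \rceil.
\]
By \eqref{E:tanny}, $\mathrm{Odd}(\mathbb{T}_n)$ is symmetric about $n/2$, so $\Delta(\mathbb{T}_n) = n - 2\,\mathrm{Odd}(\mathbb{T}_n)$ is symmetric about $0$, which proves (i).

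For (ii) and (iii), I would exploit the identity $\Delta(\mathbb{T}_{n+1}) = \Delta(\mathbb{T}_n) + \eta_n$, where $\eta_n \in \{-1, +1\}$ records whether the new vertex $n+1$ becomes even ($\eta_n = +1$) or odd ($\eta_n = -1$) in $\mathbb{T}_{n+1}$. Conditional on $\mathbb{T}_n$, these two outcomes have probabilities $\mathrm{Odd}(\mathbb{T}_n)/n$ and $(n - \mathrm{Odd}(\mathbb{T}_n))/n$ respectively, so that $\EE(\eta_n \mid \mathbb{T}_n) = -\Delta(\mathbb{T}_n)/n$ while $\eta_n^2 = 1$. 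Expanding the square yields
\[
\EE(\Delta(\mathbb{T}_{n+1})^2 \mid \mathbb{T}_n) = (1 - 2/n)\,\Delta(\mathbb{T}_n)^2 + 1,
\]
and since the deterministic sequence $a_n = n/3$ satisfies the same linear recurrence $a_{n+1} = (1 - 2/n)\,a_n + 1$ with $a_3 = \EE(\Delta(\mathbb{T}_3)^2) = 1$ (checked directly, as $\Delta(\mathbb{T}_3) = \pm 1$ equiprobably), (ii) follows by induction.

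For (iii), expanding $(\Delta(\mathbb{T}_n) + \eta_n)^4$ and using $\eta_n^2 = 1$ and $\eta_n^3 = \eta_n$ produces
\[
\EE(\Delta(\mathbb{T}_{n+1})^4 \mid \mathbb{T}_n) = (1 - 4/n)\,\Delta(\mathbb{T}_n)^4 + (6 - 4/n)\,\Delta(\mathbb{T}_n)^2 + 1.
\]
Taking expectations and substituting the exact identity from (ii) leaves a scalar recursion of the form $b_{n+1} \leq (1 - 4/n)\,b_n + 2n$ for $b_n \coloneqq \EE(\Delta(\mathbb{T}_n)^4)$ once $n \geq 3$. The ansatz $b_n \leq Cn^2$ closes inductively for any $C \geq 1/3$ once $n \geq 4$, so the choice $C = 6$ is more than comfortable; the small cases $n \in \{1, 2, 3\}$ give $b_n \in \{1, 0, 1\}$ by direct computation, which are trivially bounded by $6n^2$. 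The main step requiring care is simply the bookkeeping in the fourth-moment recursion; the subsequent induction is loose and absorbs lower-order constants effortlessly.
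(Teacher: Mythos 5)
Your proof is correct. Parts (i) and (ii) follow the paper's own route: (i) is exactly the paper's ``immediate from \eqref{E:tanny}'' observation, which you have simply written out (correctly, including the a.s.\ non-integrality of $U_1+\cdots+U_{n-1}$ needed to flip the ceiling), and (ii) is the same conditional second-moment recursion $\EE(\Delta({\mathbb T}_{n+1})^2)=(1-2/n)\EE(\Delta({\mathbb T}_n)^2)+1$ with the same base case. Part (iii) is where you genuinely diverge: the paper goes back to Tanny's identity, writes $\Delta({\mathbb T}_{n+1})\overset{\mathrm{(d)}}{=}V_1+\cdots+V_n+W_n$ with $V_j$ uniform on $[-1,1]$, computes $\EE((V_1+\cdots+V_n)^4)=24(n/120+n(n-1)/72)\leq n^2/3$ by expanding the characteristic function $\theta^{-1}\sin\theta$, and finishes with $(a+b)^4\leq 16(a^4+b^4)$; you instead stay inside the tree-growth Markov chain, derive the exact fourth-moment recursion $\EE(\Delta_{n+1}^4\mid{\mathbb T}_n)=(1-4/n)\Delta_n^4+(6-4/n)\Delta_n^2+1$, plug in the exact second moment from (ii), and close the induction $b_n\leq 6n^2$. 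Your recursion is correct (the odd powers of $\eta_n$ contribute through $\EE(\eta_n\mid{\mathbb T}_n)=-\Delta_n/n$ exactly as you say), and the induction does close with room to spare. The two approaches buy different things: yours is more self-contained and uniform in spirit with (ii) --- no characteristic functions, no appeal to Tanny --- and in fact your recursion identifies the sharp asymptotic constant $b_n\sim n^2/3$ if one pushes it; the paper's computation is shorter on the page because the moments of sums of i.i.d.\ uniforms come for free from the sine expansion. The only spot in your write-up worth a half-line of care is the step from $n=3$ to $n=4$, where the coefficient $1-4/n$ is negative, so one should either invoke $b_3\geq 0$ or check $b_4$ directly ($|\Delta({\mathbb T}_4)|\leq 2$ makes this trivial); for $n\geq 4$ the induction runs exactly as you describe.
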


\begin{proof} (i) Equivalently, the assertion claims that in a random recursive tree of size at least $2$, the number of odd vertices and the number of even vertices have the same distribution. This is immediate from \eqref{E:tanny} and can also be checked directly from the construction.

(ii) This has been already observed by Sch\"utz and Trimper \cite{SchTr} in the setting of the elephant random walk; for the sake of completeness we present a short argument.
The vertex $n+1$ is odd (respectively, even) in ${\mathbb T}_{n+1}$ if and only if its parent is an even (respectively, odd) vertex in ${\mathbb T}_{n}$. Hence one has
$$\EE(\Delta({\mathbb T}_{n+1})-\Delta({\mathbb T}_{n})\mid {\mathbb T}_{n})= -\frac{1}{n}\Delta({\mathbb T}_{n}),$$
and since $\Delta({\mathbb T}_{n+1})-\Delta({\mathbb T}_{n})= \pm 1$, this yields the recursive equation 
$$\EE(\Delta({\mathbb T}_{n+1})^2)= (1-2/n)\EE(\Delta({\mathbb T}_{n})^2) +1.$$
By iteration, we conclude that $ \EE(\Delta({\mathbb T}_n)^2)= n/3$ for all $n\geq 3$.

(iii)
Recall that the process of the fractional parts $\{U_1+\cdots + U_n\}$ is a Markov chain on $[0,1)$ whose distribution at any fixed time $n\geq 1$
is uniform on $[0,1)$. Writing $V_n= 1-2U_n$ and $W_n=2\{U_1+\cdots + U_n\}-1$, we see that $V_1, V_2, \ldots$  is a sequence of i.i.d. uniform variables on $[-1,1]$
and  that $W_n$ has the uniform distribution on $[-1,1]$ too. 

The characteristic function of the uniform variable $V_j$  is 
$$\EE(\exp(\iu \theta V_j))=\theta^{-1}\sin(\theta) = 1-\frac{\theta^2}{6} + \frac{\theta^4}{120} + O(\theta^6)\qquad\text{as }\theta \to 0,$$
and therefore for every $n\geq 1$, 
\begin{align*}\EE(\exp(\iu \theta (V_1+\cdots+V_n)))&=\left(1-\frac{\theta^2}{6} + \frac{\theta^4}{120} + O(\theta^6)\right)^n\\
&=1-\frac{n}{6}\theta^2 + \left(\frac{n}{120}+ \frac{n(n-1)}{72}\right) \theta^4+ O(\theta^6). 
\end{align*}
It follows that 
$$\EE( (V_1+\cdots+V_n)^4)= 24 \left(\frac{n}{120}+ \frac{n(n-1)}{72}\right) \leq n^2/3.$$

We can rephrase \eqref{E:tanny} as the identity in distribution
$$\Delta({\mathbb T}_{n+1})\,{\overset{\mathrm{(d)}}{=}}\, V_1+\cdots+V_n + W_n.$$ 
Since $\EE(W_n^4)=1/3$, the proof is completed with the elementary bound $(a+b)^4\leq 16(a^4+b^4)$.
\end{proof}

We now conclude this section with an application of \eqref{E:tanny} to the asymptotic normality of $\Delta({\mathbb T}_n)$. Since $\EE(U)=1/2$ and $\mathrm{Var}(U)=1/12$, the classical central limit theorem immediately yields the following.
 \begin{corollary}\label{C1} Assume $p=0$ and $\mu=\delta_1$. One has
$$\lim_{n\to \infty} \frac{ \Delta({\mathbb T}_n)}{\sqrt n} = {\mathcal N}(0,1/3)\qquad \text{in distribution.}$$
 \end{corollary}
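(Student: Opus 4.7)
The plan is to reduce Corollary \ref{C1} to the classical central limit theorem by exploiting the Laplace--Tanny identity \eqref{E:tanny}, which was recast in the proof of Corollary \ref{C0}(iii) as the distributional identity
$$\Delta(\mathbb{T}_{n+1}) \overset{\mathrm{(d)}}{=} V_1 + \cdots + V_n + W_n,$$
where $V_j := 1 - 2 U_j$ are i.i.d.\ uniform on $[-1,1]$ and $W_n := 2\{U_1+\cdots+U_n\} - 1$ takes values in $[-1,1]$. This representation already isolates the leading i.i.d.\ fluctuation from a uniformly bounded remainder, so the rest of the argument is purely classical.

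Starting from this representation, I would first apply the classical CLT to the partial sum $V_1 + \cdots + V_n$, whose summands have mean $0$ and variance $\mathrm{Var}(V_1)=4\,\mathrm{Var}(U_1)=1/3$, to obtain
$$\frac{V_1+\cdots+V_n}{\sqrt{n}} \;\longrightarrow\; \mathcal{N}(0,1/3) \qquad \text{in distribution.}$$
I would then observe that $|W_n|\leq 1$ almost surely, so $W_n/\sqrt{n}$ tends to $0$ in probability. Slutsky's lemma combines the two contributions, and the elementary fact $\sqrt{n+1}/\sqrt{n} \to 1$ lets me transfer the result from the normalization $\sqrt{n}$ appearing in the identity to $\sqrt{n+1}$; after an index shift this yields the announced Gaussian limit for $\Delta(\mathbb{T}_n)/\sqrt{n}$.

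No substantive obstacle is anticipated here: the genuine content has already been absorbed into identity \eqref{E:tanny} and the explicit reformulation used for the fourth-moment estimate in Corollary \ref{C0}(iii), so what remains is a routine assembly of the classical CLT together with Slutsky's lemma. The one point requiring a brief justification, namely that the identity $\lceil s\rceil = s + 1 - \{s\}$ underlying the representation holds almost surely for $s=U_1+\cdots+U_n$ (because this sum avoids $\mathbb{Z}$ with probability one for $n\geq 1$), is immediate.
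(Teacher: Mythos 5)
Your argument is correct and follows essentially the same route as the paper: the paper deduces Corollary \ref{C1} directly from the Tanny identity \eqref{E:tanny} and the classical CLT for the uniform sum (mean $1/2$, variance $1/12$), which is exactly what your $V_1+\cdots+V_n+W_n$ decomposition makes explicit, the bounded term $W_n$ being negligible after division by $\sqrt{n}$. The only difference is that you spell out the ceiling-function bookkeeping that the paper leaves implicit.
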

Corollary \ref{C1} goes back to  \cite{NaHeyde} in the setting of the number of leaves in random recursive trees; see also \cite{BaurBer, Bercu, ColGavSch1, ColGavSch2} for alternative proofs in the framework of the elephant random walk. 

\section{Genealogical trees in Simon's algorithm}
From now on,  $\mu$ is an arbitrary probability law on $\RR$ and we also suppose that the innovation rate is strictly positive,  $p\in(0,1)$.
Recall the construction of the sequence $(\hat X_n)$ from Simon's reinforcement algorithm \eqref{E:reinf}. 
Simon was interested in the asymptotic frequencies of variables having a given number of occurrences. Specifically, for every $n,j\in \NN$, we write  
$${N}_j(n)\coloneqq \#\{\ell \leq n: \hat X_{\ell}=X_j\}$$ 
for the number of occurrences of the variable $X_j$ until the $n$-th step of the algorithm \eqref{E:reinf}, and 
\begin{equation}\label{E:nudef}
\nu_k(n)\coloneqq \#\{1\leq j \leq \mathrm{i}(n): {N}_j(n)=k\}, \qquad k\in\NN
\end{equation}
for the  number of such variables  that have occurred exactly $k$ times. Observe also that the number of innovations satisfies the law of large numbers ${\mathrm i}(n)\sim p n$ a.s.

  \begin{lemma}\label{L2} For every 
$k\geq 1$, we have
$$\lim_{n\to \infty} \frac{\nu_k(n)}{pn}=
\frac{1}{1-p}
 {\mathrm B}(k,1+1/(1-p)) \qquad\text{in probability,}$$
where ${\mathrm B}$ denotes the beta function.
\end{lemma}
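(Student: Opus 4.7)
The plan is to proceed in three steps: derive a Markovian conditional recurrence for $\nu_k(n)$, compute the asymptotics of its expectation, and finally upgrade this to convergence in probability via an $L^2$ estimate, all by induction on $k$.

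Let $\mathcal{F}_n\coloneqq\sigma((\varepsilon_j,v(j),X_j):j\leq n)$. At step $n+1$ of Simon's algorithm, the event $\{\varepsilon_{n+1}=1\}$ (probability $p$) creates a new tree of size $1$, while on $\{\varepsilon_{n+1}=0\}$ (probability $1-p$) the uniform pick $v(n+1)$ increases $N_j(n)$ by one with conditional probability $N_j(n)/n$. Summing over trees of size $k$ gives
\begin{align*}
\EE[\nu_1(n+1)-\nu_1(n)\mid\mathcal F_n] &= p-(1-p)\frac{\nu_1(n)}{n},\\
\EE[\nu_k(n+1)-\nu_k(n)\mid\mathcal F_n] &= (1-p)\,\frac{(k-1)\nu_{k-1}(n)-k\,\nu_k(n)}{n},\qquad k\geq 2.
\end{align*}
Taking expectations yields a triangular affine recurrence for $a_k(n)\coloneqq\EE[\nu_k(n)]$. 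For $k=1$ the equation $a_1(n+1)=(1-(1-p)/n)a_1(n)+p$ is solved directly to give $a_1(n)=c_1 n+O(1)$ with $c_1=p/(2-p)$. For $k\geq 2$, variation of constants combined with the inductive hypothesis $a_{k-1}(n)\sim c_{k-1} n$ shows $a_k(n)\sim c_k n$ with
\[ c_k = \frac{(1-p)(k-1)}{1+(1-p)k}\,c_{k-1} = \frac{k-1}{\rho+k}\,c_{k-1}, \qquad \rho\coloneqq\frac{1}{1-p}. \]
Since the beta function satisfies $B(k,\rho+1)/B(k-1,\rho+1)=(k-1)/(k+\rho)$, the solution is $c_k=p\rho\,B(k,\rho+1)$, which matches the announced $\frac{p}{1-p}B(k,1+1/(1-p))$.

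To upgrade from $L^1$ to convergence in probability, I would prove inductively the bound $u_n^{(k)}\coloneqq\EE[(\nu_k(n)-a_k(n))^2]=O(n)$. Writing $Z_n^{(k)}\coloneqq\nu_k(n)-a_k(n)$ and subtracting the deterministic recurrence from the conditional one yields
\[ Z_{n+1}^{(k)}=\Bigl(1-\tfrac{(1-p)k}{n}\Bigr)Z_n^{(k)}+\frac{(1-p)(k-1)}{n}\,Z_n^{(k-1)}+M_n^{(k)}, \]
where $M_n^{(k)}$ is a martingale increment whose conditional variance is bounded uniformly in $n$ (since $\nu_k(n+1)-\nu_k(n)\in\{-1,0,1\}$). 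Squaring, taking expectations, and applying Cauchy--Schwarz with the inductive bound $u_n^{(k-1)}=O(n)$ produces
\[ u_{n+1}^{(k)}\leq\Bigl(1-\tfrac{2(1-p)k}{n}\Bigr)u_n^{(k)}+C\sqrt{u_n^{(k)}/n}+C, \]
for a constant $C=C(k,p)$. A bootstrap argument then shows $u_n^{(k)}\leq An$ for $A$ large enough, and Chebyshev's inequality combined with the asymptotics of $a_k(n)$ concludes.

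The main obstacle is the $L^2$ control in the third step: the damping factor $1-2(1-p)k/n$ is just strong enough to give the optimal order-$n$ variance bound, but the constants degrade as $p\downarrow 0$, so the bootstrap must be carried out carefully. An alternative would be to invoke a general stochastic-approximation theorem or to exploit a Polya-urn embedding of Simon's trees, but the direct computation above is self-contained and provides explicit rates that may be useful later in Section 4.
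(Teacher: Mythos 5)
Your proposal is correct, but it follows a genuinely different route from the paper, which does not prove this lemma from scratch: the paper attributes the convergence of the mean $\EE[\nu_k(n)]/n$ to Simon's original computation and upgrades to convergence in probability by invoking a concentration argument based on the Azuma--Hoeffding inequality (as in \cite{BRST} and Section 3.1 of \cite{PPS}), i.e.\ by exploiting that $\nu_k(n)$ is a bounded-differences functional of the independent inputs $(\varepsilon_j,v(j))_{j\le n}$. Your first two steps --- the conditional drift identities and the resolution of the triangular affine recurrence with $c_1=p/(2-p)$ and $c_k=\frac{(1-p)(k-1)}{1+(1-p)k}\,c_{k-1}$, matched to the Yule--Simon weights via $\mathrm B(k,\rho+1)/\mathrm B(k-1,\rho+1)=(k-1)/(k+\rho)$ --- are essentially Simon's argument and are carried out correctly. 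Where you genuinely diverge is the concentration step: the second-moment recursion for $Z_n^{(k)}=\nu_k(n)-\EE[\nu_k(n)]$ with bounded martingale increments (since $|\nu_k(n+1)-\nu_k(n)|\le 1$), Cauchy--Schwarz on the cross term, and a bootstrap to $\EE[(Z_n^{(k)})^2]=O(n)$ is sound; note that the damping factor is not even needed to close the bootstrap, since $C\sqrt{A}+C\le A$ already holds for $A$ large, which also disposes of your worry about small $p$ (and $p\in(0,1)$ is fixed throughout this section anyway). Two minor points of care: the one-step inequality with prefactor $1-2(1-p)k/n$ only holds once $n$ exceeds a multiple of $(1-p)k$, the finitely many earlier steps being absorbed into the constant $A$; and the trivial bound $|Z_n^{(k)}|\le n$ is what lets you absorb the $O(n^{-2})u_n^{(k)}$ term before the induction is established. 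As for what each approach buys: Azuma--Hoeffding yields exponential tails (hence almost sure convergence via Borel--Cantelli, more than the lemma needs) at the cost of an external reference, whereas your variance recursion is elementary and self-contained, gives the sharp order-$n$ variance with explicit constants, but only delivers convergence in probability via Chebyshev --- which is all that is required here.
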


Lemma \ref{L2} is essentially due to H.A. Simon \cite{Simon}, who actually only established the convergence of the mean value.
The strengthening to convergence in probability can be obtained  as in \cite{BRST} from a concentration argument based on the Azuma-Hoeffding's inequality;
see Section 3.1  in \cite{PPS}.  The right-hand side in the formula is a probability mass on $\NN$ known as the Yule-Simon distribution with parameter $1/(1-p)$.
We record for future use a couple of  identities which are easily checked from the integral definition of the beta function:
\begin{equation} \label{E:sumbeta} 
\frac{1}{1-p} \sum_{k=1}^{\infty} {\mathrm B}(k,1+1/(1-p)) =1
\end{equation}
and 
\begin{equation} \label{E:sumkbeta} 
 \frac{1}{1-p} \sum_{k=1}^{\infty} k {\mathrm B}(k,1+1/(1-p)) = \frac{1}{p}.
\end{equation}

For $k=1$,  Lemma  \ref{L2} reads 
\begin{equation}\label{E:Simon1}
\lim_{n\to \infty} n^{-1}\nu_1(n)= \frac{p}{2-p}\qquad\text{in probability.}
\end{equation}
We shall also need to estimate the fluctuations, which can be derived by specializing a Gaussian limit theorem for extended P\'olya urns due to Bai \textit{et al.} \cite{BHZ}. 

\begin{lemma}\label{L5} There is the convergence in distribution
$$\lim_{n\to \infty} \frac{\nu_1(n)- np/(2-p)}{\sqrt n} = \mathcal N\left(0,\frac{2p^3-8p^2+6p}{(3-2p)(2-p)^2} \right).$$
\end{lemma}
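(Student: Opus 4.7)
The plan is to exploit the elementary Markov chain structure of $\nu_1(n)$. From \eqref{E:reinf}, conditionally on $\mathcal{F}_n$ the increment $\nu_1(n+1)-\nu_1(n)$ equals $+1$ with probability $p$ (an innovation creates a new once-variable), $-1$ with probability $(1-p)\nu_1(n)/n$ (a once-variable is copied and thereby becomes a twice-variable), and $0$ otherwise. In particular
$$\EE[\nu_1(n+1)-\nu_1(n)\mid\mathcal{F}_n]=p-(1-p)\nu_1(n)/n,$$
which vanishes at $\gamma\coloneqq p/(2-p)$, consistently with \eqref{E:Simon1}.

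Setting $M_n\coloneqq \nu_1(n)-\gamma n$, the preceding identity rewrites as $\EE[M_{n+1}\mid\mathcal{F}_n]=(1-(1-p)/n)M_n$, so that $M_n$ satisfies a quasi-linear stochastic recursion $M_{n+1}=(1-(1-p)/n)M_n+D_{n+1}$ with bounded martingale-difference noise $D_{n+1}$. A direct computation, combined with \eqref{E:Simon1}, gives the limit of the conditional variance
$$\sigma_D^2\coloneqq p+(1-p)\gamma-\gamma^2=\frac{2p(1-p)(3-p)}{(2-p)^2}.$$
Plugging this into the second-moment recursion $\EE[M_{n+1}^2]\approx(1-2(1-p)/n)\EE[M_n^2]+\sigma_D^2$ and seeking a linear solution $\EE[M_n^2]\sim\sigma^2 n$ yields $\sigma^2=\sigma_D^2/(3-2p)$, which simplifies to $(2p^3-8p^2+6p)/((3-2p)(2-p)^2)$. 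A Lindeberg-type martingale CLT applied to the true martingale $a_n M_n$, with $a_n=\prod_{k=1}^{n-1}(1-(1-p)/k)^{-1}\sim c\, n^{1-p}$, then upgrades this moment estimate to the stated weak convergence of $M_n/\sqrt n$.

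Equivalently, and as the authors flag, the pair $(\nu_1(n),n-\nu_1(n))$ is a balanced two-colour generalised P\'olya urn with randomised mean replacement matrix
$$A=\begin{pmatrix} 2p-1 & 2(1-p)\\ p & 1-p\end{pmatrix},$$
whose eigenvalues $1$ and $p-1$ satisfy the small-second-eigenvalue condition $(p-1)/1<1/2$ for every $p\in(0,1]$. Consequently the Bai--Hu--Zhang theorem \cite{BHZ} applies and produces the $\sqrt n$-scale Gaussian limit, with variance given by the same spectral formula. The only real obstacle in either route is the algebraic identification of the asymptotic variance with the expression stated in the lemma; everything else is routine, since the Markov chain has bounded increments and \eqref{E:Simon1} supplies the control on $\nu_1(n)/n$ needed to take limits inside conditional variances. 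I would favour the direct martingale approach because it keeps the variance identity $\sigma_D^2/(3-2p)$ explicit rather than hidden behind the general spectral machinery.
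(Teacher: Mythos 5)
Your proposal is correct, and your preferred route is genuinely different from the paper's. The paper proves Lemma \ref{L5} by coupling Simon's algorithm with a two-colour urn (white balls for once-occurring variables, red otherwise) and then invoking Corollary 2.1 of Bai--Hu--Zhang \cite{BHZ} wholesale; the spectral data you list (balanced urn, eigenvalues $1$ and $p-1$, hence a ``small'' urn for every $p$) is exactly the structure that makes that citation legitimate, so your second route is essentially the paper's. Your first route --- the direct martingale argument --- is self-contained and checks out: the transition probabilities $+1$ w.p.\ $p$ and $-1$ w.p.\ $(1-p)\nu_1(n)/n$ are right, the recursion $\EE[M_{n+1}\mid\mathcal F_n]=(1-(1-p)/n)M_n$ is exact, the limiting conditional variance $p+(1-p)\gamma-\gamma^2=(2p^3-8p^2+6p)/(2-p)^2$ agrees with the quantity the paper calls $\sigma_M^2$, and the division by $3-2p$ coming from the second-moment recursion reproduces the stated variance. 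The martingale CLT step is unproblematic because the increments of $a_nM_n$ are of size $O(n^{1-p})$ while $s_n^2\asymp n^{3-2p}$, so the Lindeberg ratio is $O(1/n)$, and the convergence of the normalized conditional quadratic variation follows from \eqref{E:Simon1} and boundedness. What the direct route buys is transparency (the identity $\sigma^2=\sigma_D^2/(3-2p)$ is visible rather than hidden in the spectral formula (2.2) of \cite{BHZ}); what the paper's route buys is brevity and a ready-made theorem covering the joint randomness of the replacement vectors. One small wording slip: the drift $p-(1-p)\nu_1(n)/n$ does \emph{not} vanish at $\nu_1(n)/n=\gamma$ (it equals $\gamma$ there); the correct statement is that $\gamma$ is the fixed point of $x\mapsto p-(1-p)x$, equivalently that the drift of $M_n=\nu_1(n)-\gamma n$ vanishes --- which is what your subsequent (correct) computation actually uses.
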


\begin{proof} The proof relies on the observation that Simon's algorithm can be coupled with a two-color urn governed by the same sequences of random bits $(\varepsilon_n)$ and of uniform variables $(v(n))$ as follows. Imagine that we observe the outcome of Simon's algorithm at the $n$-step
and that for each $1\leq j \leq n$, we associate a white ball if the variable $\hat X_j$  appears exactly once, and a red ball otherwise.
At the initial time $n=1$, the urn contains just one white ball and no red balls. 
At each step $n\geq 2$, a ball picked uniformly at random in the urn (in terms of Simon's algorithm, this is given by the uniform variable $v(n)$). 
If $\varepsilon_n=1$, then the ball picked is returned to the urn and one adds a white ball (in terms of Simon's algorithm, this corresponds to an innovation 
and $\nu_1(n)=\nu_1(n-1)+1$). 
If $\varepsilon_n=0$, then the ball picked is removed from the urn and one adds two red balls
(in terms of Simon's algorithm, this corresponds to a repetition  
and either $\nu_1(n)=\nu_1(n-1)-1$ if the ball picked is white, or $\nu_1(n)=\nu_1(n-1)$ if the ball picked is red). 
By construction, the number $W_n$ of white  balls in the urn coincides with the number $\nu_1(n)$ of variables that have appeared exactly once 
in Simon's algorithm \eqref{E:reinf}. 

We shall now check our claim in the setting of \cite{BHZ} by specifying the quantities which appear there. The evolution of number of white balls in the urn is governed by Equation (2.1) in \cite{BHZ}, viz. 
$$W_n=W_{n-1}+ I_nA_n+(1-I_n)C_n,$$ where $I_n=1$ if a white ball is picked and $I_n=0$ otherwise. In our framework, we further have 
$A_n=2\varepsilon_n-1$ and $C_n=\varepsilon_n$. If we write ${\mathcal F}_n$ for the natural filtration generated by the variables $(A_k,C_k, I_k)_{k\leq n}$,
then $A_n$ and $C_n$ are independent of ${\mathcal F}_{n-1}$ with
$$\EE(A_n)= 2p-1, \quad \EE(C_n)= p, \quad \mathrm{Var}(A_n)= 4(p-p^2), \quad \mathrm{Var}(C_n)= p-p^2.$$
This gives in the notation (2.2) of \cite{BHZ}:
\begin{align*}
\sigma^2_M&= \frac{p}{2-p}4(p-p^2)+ \left(1-\frac{p}{2-p}\right) (p-p^2) + (p-1)^2 \frac{p}{2-p} \left(1-\frac{p}{2-p}\right)\\
&= \frac{2p^3-8p^2+6p}{(2-p)^2},
\end{align*}
and finally 
$$\sigma^2=\frac{2p^3-8p^2+6p}{(3-2p)(2-p)^2}.$$
Our claim can now be seen as a special case of Corollary 2.1 in \cite{BHZ}. 
\end{proof}

We shall also need a refinement of Lemma \ref{L2} in which one does not only record the number of occurrences of the variable $X_j$, but more generally the genealogical structure of these occurrences. We need to introduce first some notation in that respect. 

Fix $n\geq 1$ and let some $1\leq j \leq {\mathrm i}(n)$ (i.e. the variable $X_j$ has already appeared at the $n$-th step of the algorithm). 
Write   $\ell_1< \ldots < \ell_k \leq n$ for the increasing sequence of steps of the algorithm at which $X_j$ appears, where $k={N}_j(n)\geq 1$.
The genealogy of occurrences of the variable $X_j$ until the $n$-th step is recorded as a tree $T_j(n)$ on $\{1, \ldots, k\}$
such that for every $1\leq a < b \leq k$, $(a,b)$ is an edge of  $T_j(n)$ if and only if $v(\ell_b)=\ell_a$, that is, if and only if the identity 
$\hat X_{\ell_b}=X_j$ actually results from the fact that  the algorithm repeats the variable $\hat X_{\ell_a}$ at its $\ell_b$-th step. 
Plainly, $T_j(n)$ is an increasing tree with size $k$, meaning a tree on $\{1, \ldots, k\}$ such that the sequence of vertices along any branch from the 
root $1$ to a leaf is increasing. In this direction, we recall that there are $(k-1)!$ increasing trees with size $k$ and that the uniform distribution of the set 
increasing trees with size $k$ coincides with the law of the random recursive tree of size $k$, ${\mathbb T}_k$. See for instance Section 1.3.1 in Drmota \cite{Drmota}. 

\begin{figure}
\begin{center}
\includegraphics[height=6cm]{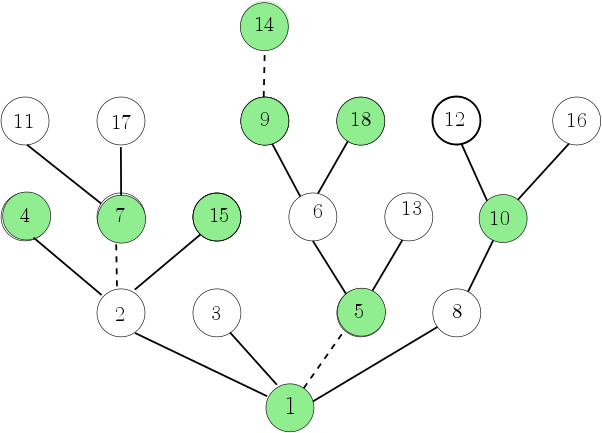}
\end{center}
\caption{ Example of a genealogical forest representation of Simon's algorithm \eqref{E:reinf} after 18 steps. The dotted edges account for innovation events, i.e. $\varepsilon_j=1$ 
and the four genealogical trees are rooted at $1,5,7,14$. In each subtree, vertices at even generations are colored in green and vertices at odd generation  in white. For instance the genealogical tree $T_2(18)$ is rooted at $5$, it has $3$ even vertices and $2$ odd vertices. 
 }\label{fig:summary}
\end{figure}
More generally, the distribution of the entire genealogical forest  given the sizes of the genealogical trees can be described as follows.
\begin{lemma}\label{L3} Fix $n\geq 1$, $1\leq k\leq n$, and let $n_1, \ldots , n_k\geq 1$ with sum $n_1+ \cdots + n_k=n$. Then conditionally on
${N}_j(n)=n_j$ for every $j=1, \ldots, k$, the genealogical trees $T_1(n), \ldots , T_k(n)$ are independent random recursive trees with respective sizes $n_1, \ldots , n_k$.
\end{lemma}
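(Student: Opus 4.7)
I would prove a slightly stronger statement, conditioning on more than just the sizes, and then marginalize. Fix an ordered partition $L = (L_1, \ldots, L_k)$ of $\{1, \ldots, n\}$ into disjoint classes with $|L_j| = n_j$ and $\min L_1 < \cdots < \min L_k$ (these minima will be the innovation times, and the class $L_j$ consists of the positions where $X_j$ appears). Write $\ell_1^{(j)} < \cdots < \ell_{n_j}^{(j)}$ for the elements of $L_j$. Let $\mathcal{E}_L$ denote the event that the set of positions at which $X_j$ appears in Simon's algorithm is exactly $L_j$ for each $j$. The key claim is that, conditionally on $\mathcal{E}_L$, the trees $T_1(n), \ldots, T_k(n)$ are independent random recursive trees of respective sizes $n_1, \ldots, n_k$. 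Since this conditional distribution depends only on $(n_1, \ldots, n_k)$ and not on the specific partition $L$, averaging over all $L$ compatible with a given size vector yields Lemma \ref{L3}.

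To establish the claim, I observe that $\mathcal{E}_L$ is measurable with respect to $(\varepsilon_m)_{m \le n}$ together with the class-membership indicators $\mathbf{1}_{\{v(m) \in L_j\}}$ for each $m$ with $\varepsilon_m = 0$; crucially, it does \emph{not} depend on the exact value of $v(m)$ within a class. Since the variables $(v(m))_{m \ge 2}$ are a priori independent and uniform on $\{1, \ldots, m-1\}$, conditioning on $\mathcal{E}_L$ preserves their independence, and if $m = \ell_b^{(j)}$ with $b \ge 2$ then the conditional law of $v(m)$ is uniform on $\{\ell_1^{(j)}, \ldots, \ell_{b-1}^{(j)}\}$. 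Relabeling $\ell_i^{(j)} \mapsto i$ within each class, this says exactly that in each tree $T_j(n)$ the parent of the $b$-th vertex (in order of appearance) is uniform on $\{1, \ldots, b-1\}$, independently across $b$, and that the trees $T_j(n)$ are built from disjoint families of $v(m)$'s and hence conditionally independent. This is the defining recursive construction of the random recursive tree of size $n_j$.

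The step requiring the most care is the measurability observation: one must verify that $\mathcal{E}_L$ is indeed determined by the $(\varepsilon_m)$ and the class-membership indicators alone, which is what allows the conditioning to decompose and to leave the within-class laws of the $v(m)$ untouched beyond their restriction to the appropriate set. Once this is checked, the rest reduces to the elementary fact that a uniform variable conditioned to lie in a prescribed subset is uniform on that subset, together with the independence of the family $(v(m))$, and the marginalization step is immediate because the enriched conditional law depends on $L$ only through $(n_1, \ldots, n_k)$.
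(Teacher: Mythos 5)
Your proof is correct, but it takes a genuinely different route from the paper. The paper observes that Simon's genealogical forest is obtained from the single random recursive tree $\mathbb T_n$ (with edges $(v(j),j)$, $2\le j\le n$) by deleting the edges at the innovation times $\{j:\varepsilon_j=1\}$, and then invokes the Meir--Moon splitting property --- removing a given edge from $\mathbb T_n$ yields, conditionally on their sizes, two independent random recursive trees --- iterated over the deleted edges. You instead argue from first principles: you condition on the finer event $\mathcal E_L$ that the occurrence classes are a prescribed ordered partition $L$, check that $\mathcal E_L$ factorizes as a product event over the independent coordinates $(\varepsilon_m)$ and $(v(m))$ (namely, the $\varepsilon$'s mark exactly the class minima and each $v(m)$ with $\varepsilon_m=0$ lands in the class of $m$), so that conditioning leaves the $v(m)$ independent with $v(\ell^{(j)}_b)$ uniform on $\{\ell^{(j)}_1,\dots,\ell^{(j)}_{b-1}\}$, which after relabeling is exactly the recursive construction of $\mathbb T_{n_j}$; you then marginalize over $L$. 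Both arguments are sound. Yours is self-contained (it amounts to re-proving the iterated splitting property in this particular setting rather than citing it) and in fact establishes the slightly stronger statement that the conclusion holds conditionally on the whole partition $L$, not merely on the size vector; the paper's version is shorter at the cost of importing the splitting property as a known result. The one point you rightly flag --- that $\mathcal E_L$ is determined by the $\varepsilon$'s together with the class-membership indicators of the $v(m)$'s --- does check out by induction along the chains $m\mapsto v(m)$, so there is no gap.
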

\begin{proof} Recall that the set $\{(v(j),j)$ for $1\leq j\leq n\}$ is that of the edges of the random recursive tree  with size $n$, ${\mathbb T}_n$.
The well-known splitting property states that removing a given edge, say $(v(j),j)$ for some fixed $j$, from ${\mathbb T}_n$ produces two subtrees which in turn, conditionally on their sizes, are two independent random recursive trees. This has been observed first by Meir and Moon \cite{MeirMoon}; see also \cite{BBsurvey} and references therein for more about this property.

The genealogical trees $T_1(n), \ldots , T_k(n)$ result by removing the edges $(v(j),j)$ in ${\mathbb T}_n$ for which $\varepsilon_j=1$ and enumerating in each subtree component their vertices in the increasing order. 
Our statement is now easily seen by applying iteratively this  splitting property. \end{proof}

We shall also need for the proofs of Proposition \ref{P0} and Theorem \ref{T2} an argument of uniform integrability that relies in turn on the following lemma. 
Recall that if $T$ is a rooted tree, $\Delta(T)$ denotes the difference between the number of vertices at even distance from root and that at odd distance. 
\begin{lemma}\label{L6} For every $1<\beta<2\wedge \frac{1}{1-p}$, one has
$$\sup_{n\geq 1} \frac{1}{n} \sum_{j=1}^{n}\EE({N}_j(n)^{\beta})< \infty$$
and
$$\sup_{n\geq 1} \frac{1}{n}\EE\left( \sum_{j=1}^{{\mathrm i}(n)} |\Delta(T_j(n) ) |^{2\beta} \right) <\infty.$$
\end{lemma}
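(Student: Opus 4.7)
The plan is to deduce the second bound from the first by a tree-structure argument, and to establish the first bound via a Gronwall-type recursion on $f_\beta(n)\coloneqq \EE\bigl(\sum_{j=1}^n N_j(n)^\beta\bigr)$.

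For the second bound, I would condition on the sizes $(N_j(n))_{1\leq j\leq \mathrm{i}(n)}$. By Lemma~\ref{L3}, given those sizes each $T_j(n)$ is an independent random recursive tree of size $N_j(n)$. By Corollary~\ref{C0}(iii) together with Jensen's inequality (which applies since $2\beta\leq 4$),
$$\EE\bigl(|\Delta(\mathbb T_k)|^{2\beta}\bigr) \leq \bigl(\EE(\Delta(\mathbb T_k)^4)\bigr)^{\beta/2} \leq 6^{\beta/2} k^\beta,$$
so
$$\EE\biggl(\sum_{j=1}^{\mathrm{i}(n)} |\Delta(T_j(n))|^{2\beta}\biggr) \leq 6^{\beta/2}\,\EE\biggl(\sum_{j=1}^{\mathrm{i}(n)} N_j(n)^\beta\biggr) \leq 6^{\beta/2} f_\beta(n),$$
which reduces the second claim to the first.

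For the first bound, I examine the transition from step $n$ to step $n+1$. Given $\mathcal F_n$, with probability $p$ an innovation occurs and contributes $1$ to the sum, while with probability $1-p$ some $X_j$ is repeated with conditional probability $N_j(n)/n$, raising its contribution from $N_j(n)^\beta$ to $(N_j(n)+1)^\beta$. Combining the Taylor-type estimate
$$(k+1)^\beta - k^\beta \leq \beta k^{\beta-1} + \beta k^{\beta-2}\qquad (k\geq 1,\ 1\leq\beta\leq 2),$$
obtained by two successive applications of the mean value theorem, with the deterministic bound
$$\sum_{j=1}^{\mathrm{i}(n)} N_j(n)^{\beta-1} \leq \mathrm{i}(n)^{2-\beta}\, n^{\beta-1} \leq n$$
(a consequence of Jensen's inequality for the concave function $x\mapsto x^{\beta-1}$ and the identity $\sum_j N_j(n)=n$), I obtain
$$f_\beta(n+1) \leq \Bigl(1 + \tfrac{(1-p)\beta}{n}\Bigr) f_\beta(n) + p + (1-p)\beta.$$
Iterating this inequality (cf.\ Lemma~4.1.2 in \cite{Durrett}) yields $f_\beta(n) = O(n)$ precisely when $(1-p)\beta<1$, which is exactly the standing assumption $\beta<1/(1-p)$.

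The main obstacle is securing the \emph{sharp} multiplicative constant $(1-p)\beta$ in the recursion; a cruder bound such as $(k+1)^\beta - k^\beta \leq \beta 2^{\beta-1} k^{\beta-1}$ would introduce a spurious factor $2^{\beta-1}$ and restrict the argument to a strictly smaller range than $\beta<1/(1-p)$. The refined Taylor estimate above, coupled with the concavity-based control of the lower-order term $\sum_j N_j(n)^{\beta-1}$, is what matches the integrability threshold $1/(1-p)$ of the limiting Yule-Simon distribution from Lemma~\ref{L2}.
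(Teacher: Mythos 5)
Your proof is correct. For the second bound you argue exactly as the paper does: condition on the sizes via Lemma~\ref{L3}, combine Corollary~\ref{C0}(iii) with Jensen's inequality to get $\EE(|\Delta(\mathbb T_k)|^{2\beta})\leq 6^{\beta/2}k^{\beta}$, and reduce everything to the first bound. For the first bound, however, you take a genuinely different route. The paper simply invokes Lemma~3.6 of \cite{NRLP}, which supplies the individual estimate $\EE(N_j(n)^{\beta})\leq c\,(n/j)^{\eta}$ for some $\eta\in(0,1)$ and all $1\leq j\leq n$, and then sums over $j$ to get $O(n)$. You instead run a self-contained one-step recursion on $f_\beta(n)=\EE\bigl(\sum_j N_j(n)^{\beta}\bigr)$: the conditional increment computation (innovation with probability $p$ contributing $1$; repetition of $X_j$ with conditional probability $N_j(n)/n$), the two-term mean-value bound $(k+1)^{\beta}-k^{\beta}\leq \beta k^{\beta-1}+\beta k^{\beta-2}$, and the control $\sum_j N_j(n)^{\beta-1}\leq n$ (for which the cruder observation $N_j(n)^{\beta-1}\leq N_j(n)$ would already suffice, since $\beta\leq 2$ and $N_j(n)\geq 1$ on the relevant indices) all check out, and the resulting inequality $f_\beta(n+1)\leq \bigl(1+(1-p)\beta/n\bigr)f_\beta(n)+O(1)$ iterates to $f_\beta(n)=O(n)$ precisely when $(1-p)\beta<1$. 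What the paper's citation buys is the finer per-index decay in $j$, which is not needed for this lemma; what your argument buys is self-containedness and a transparent explanation of where the threshold $\beta<1/(1-p)$ comes from, namely the requirement that the Gronwall exponent $(1-p)\beta$ stay below $1$ --- and, as you note, keeping the sharp constant $\beta$ rather than $\beta 2^{\beta-1}$ in the Taylor step is exactly what makes the argument reach the full range.
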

\begin{proof} The first claim is a consequence of Lemma 3.6 of \cite{NRLP} which states that for $\beta \in(1,1/(1-p))$ [beware that the parameter denoted by $p$ in \cite{NRLP} is actually $1-p$ here], there exists numerical constants $c>0$ and $\eta\in(0,1)$ such that 
$\EE({N}_j(n)^{\beta})\leq c(n/j)^{\eta}$ for all $1\leq j \leq n$.

Next, combining  Jensen's inequality with Corollary \ref{C0}(iii), we get that for $k\geq 2$
$$\EE( |\Delta({\mathbb T}_k)|^{2\beta}) \leq \EE(|\Delta({\mathbb T}_k)|^4)^{\beta/2}\leq 6k^{\beta}.$$
Then recall that conditionally on ${N}_j(n)=k\geq 1$, $T_j(n) $ has the  law  of   the random recursive tree with size $k$,  ${\mathbb T}_k$, and hence 
\begin{align*}
\EE\left( \sum_{j=1}^{{\mathrm i}(n)} |\Delta(T_j(n) ) |^{2\beta} \right) 
&=  \sum_{j=1}^{n} \left(\sum_{k=1}^{n} \EE( |\Delta({\mathbb T}_k)|^{2\beta}) \PP({N}_j(n)=k)\right)\\
&\leq   6\sum_{j=1}^{n} \left(\sum_{k=1}^{n} k^{\beta} \PP({N}_j(n)=k)\right).
\end{align*}
We know from the first part that this last quantity is finite, and the proof is complete.
\end{proof}

\section{Proofs of the main results}

As its title indicates, the purpose of this section is to establish Proposition \ref{P0} and  Theorem  \ref{T2}. 
The observation that for every $n\geq 1$ and $1\leq j \leq {{\mathrm i}(n)}$, the variable $X_j$ appears exactly 
$\mathrm{Even}(T_j(n))$ times and its opposite $-X_j$ exactly $\mathrm{Odd}(T_j(n))$ times until the $n$-step of  the algorithm \eqref{E:negreinf},
yields the identity
\begin{equation}\label{E:repres}
\check S(n)\coloneqq \sum_{i=1}^n\check X_i = \sum_{j=1}^{{\mathrm i}(n)} \Delta(T_j(n)) X_j,
\end{equation}
which lies at the heart of our approach. We stress that in \eqref{E:repres} as well as in related expressions that we shall use in the sequel,  the sequence of i.i.d. variables $(X_n)$ and the family of genealogical trees $(T_j(n))$ are independent, because the latter are constructed from the sequences $(\varepsilon_n)$ and $(v(n))$ only.

Actually, our proof analyzes more precisely the effects of the counterbalancing algorithm \eqref{E:negreinf}  by estimating specifically the contributions of certain sub-families  to the asymptotic behavior of  $\check S$. Specifically, we set for every $k\geq 1$, 
\begin{equation} \label{E:checkSk}
\check S_{k}(n) \coloneqq \sum_{j=1}^{{\mathrm i}(n)}  \Delta(T_j(n)) X_j \Ind_{N_j(n)=k},
\end{equation}
so that
$$\check S(n) = \sum_{k=1}^{n} \check S_k(n).$$

\subsection{Proof of Proposition \ref{P0}}
The case $p=1$ (no counterbalancing events) of Proposition \ref{P0} is just the weak law of large numbers, and the case $p=0$ (no innovations) is a consequence of Corollary \ref{C1}. The case $p\in(0,1)$ derives from the next lemma which shows more precisely  that the variables $X_j$ that have appeared 
in the algorithm \eqref{E:negreinf} but have not yet counterbalanced determine 
 the ballistic behavior of $\check S$, whereas those  that have appeared twice or more (i.e. such that $N_j(n)\geq 2$)  have a negligible impact.

\begin{lemma}\label{L7}
Assume that $X_1\in L^1(\PP)$ and recall that $m_1=\EE(X_1)$. Then the following limits hold in probability:
 \begin{enumerate}
\item[(i)] $\lim_{n\to \infty} n^{-1} \check S_1(n) =m_1p/(2-p)$,
\item[(ii)]
$\lim_{n\to \infty} n^{-1} \sum_{k=2}^n \left| \check S_{k}(n) \right| = 0$. 
\end{enumerate}
\end{lemma}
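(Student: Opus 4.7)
The plan is to exploit the decomposition \eqref{E:checkSk} together with Lemma \ref{L3} and the moment properties of $\Delta(\mathbb T_k)$ from Corollary \ref{C0}.

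For (i), the key observation is that $\Delta(\mathbb T_1) = 1$, so
$$\check S_1(n) = \sum_{j : N_j(n) = 1} X_j.$$
The index set $\{j : N_j(n) = 1\}$ and its cardinality $\nu_1(n)$ are measurable with respect to the $\sigma$-algebra generated by $(\varepsilon_n, v(n))_{n \geq 2}$, hence independent of the i.i.d.\ sequence $(X_j)$. Conditioning on $\nu_1(n) = m$, $\check S_1(n)$ therefore has the same law as $X_1 + \cdots + X_m$. The SLLN for $(X_j)$ combined with $\nu_1(n) \to \infty$ in probability gives $\check S_1(n)/\nu_1(n) \to m_1$ in probability; multiplying by $\nu_1(n)/n \to p/(2-p)$ from \eqref{E:Simon1} then yields (i).

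For (ii), I would split the sum at a threshold $M \geq 2$. For each fixed $k \geq 2$, Lemma \ref{L3} asserts that conditionally on the tree sizes, the trees $T_j(n)$ with $N_j(n) = k$ are i.i.d.\ copies of $\mathbb T_k$, so by Corollary \ref{C0}(i) their $\Delta$-values are i.i.d.\ centered. Combined with the independent integrable $X_j$'s, the summands of $\check S_k(n)$ are (conditionally on sizes) i.i.d., centered and integrable. Since $\nu_k(n) \to \infty$ in probability by Lemma \ref{L2}, the weak law of large numbers yields $\check S_k(n)/\nu_k(n) \to 0$ in probability, and therefore $\sum_{k=2}^M |\check S_k(n)|/n \to 0$ in probability for every fixed $M$.

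For the tail $k > M$, the triangle inequality combined with the independence of $(X_j)$ from the trees gives
$$\EE\biggl[\sum_{k > M} |\check S_k(n)| \biggr] \leq \EE[|X_1|] \cdot \EE\biggl[\sum_{j=1}^{\mathrm i(n)} |\Delta(T_j(n))| \, \Ind_{N_j(n) > M}\biggr].$$
Conditioning on $N_j(n) = k$ and applying Cauchy-Schwarz with Corollary \ref{C0}(ii) gives $\EE[|\Delta(T_j(n))| \mid N_j(n) = k] \leq \sqrt{k/3}$ for $k \geq 3$, and the elementary inequality $\sqrt{k/3} \leq k/\sqrt{3M}$ for $k \geq M$, together with the identity $\sum_j N_j(n) = n$, bound the inner expectation by $n/\sqrt{3M}$. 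Hence $\EE\bigl[\sum_{k > M}|\check S_k(n)|/n\bigr] \leq \EE[|X_1|]/\sqrt{3M}$, which vanishes as $M \to \infty$; a routine Markov-and-split argument combines the two estimates to conclude. The chief obstacle is precisely this tail step: a direct $L^2$ analysis of $\check S_k(n)$ would require $m_2 < \infty$ (unavailable here) and would moreover lead to a divergent sum $\sum_k \sqrt{k}\,\pi_k$ when $p$ is small, so one must instead apply the $L^2$ bound only to the tree factor $\Delta(\mathbb T_k)$ while retaining $L^1$ integrability for $X_1$, then invoke the conservation law $\sum_j N_j(n) = n$ via $\sqrt{k} \leq k/\sqrt{M}$.
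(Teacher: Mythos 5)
Your proof is correct. Part (i) is exactly the paper's argument: the identity in law $\check S_1(n)\overset{\mathrm{(d)}}{=}X_1+\cdots+X_{\nu_1(n)}$, the law of large numbers, and \eqref{E:Simon1}. Part (ii) has the same skeleton as the paper's proof (almost sure/in probability convergence for each fixed $k$ via Lemma \ref{L3} and the symmetry of $X_1\Delta(\mathbb T_k)$ from Corollary \ref{C0}(i), followed by a uniform-in-$n$ control of the tail $k>M$ in $L^1$), but your tail estimate is genuinely different. The paper bounds $|\Delta(T_j(n))|$ crudely by the tree size $N_j(n)$, which forces it to control $\sum_{j}\EE\bigl(N_j(n)\Ind_{N_j(n)\geq \ell}\bigr)$ via the $\beta$-moment bound of Lemma \ref{L6} (itself resting on an external estimate from \cite{NRLP}). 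You instead keep the tree factor and use Cauchy--Schwarz with Corollary \ref{C0}(ii) to get $\EE\bigl(|\Delta(T_j(n))|\mid N_j(n)=k\bigr)\leq\sqrt{k/3}$, then convert $\sqrt{k}\leq k/\sqrt{M}$ on $\{k\geq M\}$ and invoke the deterministic conservation $\sum_{j\leq \mathrm{i}(n)}N_j(n)=n$ to obtain the clean uniform bound $\EE[|X_1|]/\sqrt{3M}$. This is more elementary and self-contained: it exploits the fact that the counterbalancing mechanism reduces the contribution of a tree of size $k$ from order $k$ to order $\sqrt{k}$, which is precisely the heuristic the paper describes in the introduction, and it dispenses with Lemma \ref{L6} for this particular lemma (though note that Lemma \ref{L6} is still needed later, e.g.\ in Lemma \ref{L9} where only the first moment of $N_j(n)\Ind_{N_j(n)\geq K}$ appears and the conservation law alone no longer suffices). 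The only cosmetic point is that your bound $\EE|\Delta(\mathbb T_k)|\leq\sqrt{k/3}$ is stated for $k\geq 3$, so you should take $M\geq 2$ (as you implicitly do); this costs nothing.
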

\begin{proof}
(i) Recall the notation \eqref{E:nudef} and that the sequence of i.i.d. variables $(X_j)$ is independent of the events $\{{N}_j(n)=1\}$.
We see that there is the identity in distribution 
$$\check S_1(n) \,{\overset{\mathrm{(d)}}{=}}\, S_1(\nu_1(n)),$$
where $S_1(n)=X_1+\cdots + X_n$ is the usual random walk.
The claim follows readily from the law of large numbers and \eqref{E:Simon1}.

(ii)
We first argue that for each fixed $k\geq 2$,
\begin{equation}\label{E:fixedk}
\lim_{n\to \infty} n^{-1} \check S_{k}(n) = 0 \qquad\text{almost surely.}
\end{equation}
Indeed, recall that $\nu_k(n)$ denotes the  number of genealogical trees $T_j(n)$ with size $k$.
 It follows from Lemma \ref{L3} that conditionally on $\nu_k(n)=\ell$, the sub-family of such $T_j(n)$ enumerated in the increasing order of the index $j$, is given by $\ell$ i.i.d. copies of the random recursive tree ${\mathbb T}_k$.  Hence, still conditionally on $\nu_k(n)=\ell$,  enumerating the elements of the sub-family  $\{X_j   \Delta(T_j(n)): {N}_j(n)=k\}$  in the increasing order of $j$ yields $\ell$ independent variables, each being distributed as $X_1 \Delta({\mathbb T}_k)$ with $X_1$ and $ \Delta({\mathbb T}_k)$ independent. We deduce from Corollary \ref{C0}(i) that the variable $X_1 \Delta({\mathbb T}_k)$ symmetric, and since it is also integrable,  it is centered. Since $\nu_k(n)\leq n$, this readily entails \eqref{E:fixedk} by an application of the law of large numbers.  

The proof can be completed by an argument of uniform integrability. In this direction, fix an arbitrarily large integer $\ell$ and write by the triangular inequality
\begin{align*} \frac{1}{n} \EE\left( \sum_{k=\ell}^n \left| \check S_{k}(n) \right| \right) &\leq  \frac{1}{n} \EE\left( \sum_{j=1}^{{\mathrm i}(n)} |X_j |  {N}_j(n) \Ind_{{N}_j(n)\geq \ell} \right)\\
&=\frac{ \EE(|X_1|)}{n} \sum_{j=1}^{n} \EE\left(  {N}_j(n) \Ind_{{N}_j(n)\geq \ell} \right)\\
&\leq  \ell^{1-\beta} \frac{ \EE(|X_1|)}{n}\sum_{j=1}^{n} \EE\left(  {N}_j(n)^{\beta}  \right),
\end{align*}
where the last inequality holds for any $\beta>1$. We see from Lemma \ref{L6} that the right-hand side converges to $0$ as $\ell \to \infty$ uniformly in $n\geq 1$, and the rest of the proof is straightforward.
\end{proof}

\subsection{Proof of Theorem \ref{T2}}
For $p=1$ (no counterbalancing events), Theorem \ref{T2}  just reduces to the classical central limit theorem, so we assume $p\in(0,1)$.
The first step of the proof consists in determining jointly the fluctuations of the components $\check S_k$ defined in \eqref{E:checkSk}.

\begin{lemma}\label{L8} Assume that $m_2=\EE(X^2_1)<\infty$. 
Then as $n\to \infty$,  the sequences of random variables
$$ \frac{ \check S_1(n) -pm_1/(2-p)}{\sqrt n} \qquad \text{(for $k=1$)}$$
and 
$$ \frac{ \check S_{k}(n)}{\sqrt n}\qquad \text{(for $k\geq 2$)}$$
converge jointly in distribution 
 towards a sequence 
 $$\left( \mathcal N_{k}(0,\sigma^2_{k})\right)_{k\geq 1}$$
of independent centered Gaussian variables, where 
$$ \sigma_1^2\coloneqq \frac{p m_2}{2-p} - \frac{p^2m_1^2}{(3-2p)(2-p)^2},$$ $\sigma^2_2\coloneqq0$, and 
 $$\sigma^2_{k}\coloneqq  \frac{ k p  m_2}{3(1-p)}   \mathrm B(k,1+1/(1-p)) \qquad \text{for $k\geq 3$}.$$
 \end{lemma}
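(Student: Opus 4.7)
The approach is the Cram\'er--Wold device. It suffices to show that, for any finite collection of indices $k_1<\cdots<k_m$ and reals $\theta_1,\ldots,\theta_m$, the quantity
$$L_n \coloneqq \frac{1}{\sqrt n}\sum_{i=1}^m \theta_i \Bigl(\check S_{k_i}(n) - \Indic{k_i=1}\frac{pm_1 n}{2-p}\Bigr)$$
converges in distribution to $\mathcal N\bigl(0,\sum_i \theta_i^2 \sigma_{k_i}^2\bigr)$. The value $k_i=2$ is trivial: the size-$2$ random recursive tree consists of a root and a single child, so $\Delta(\mathbb T_2)\equiv 0$, whence $\check S_2(n)\equiv 0$ and $\sigma_2^2=0$, matching. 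For $k\neq 2$, let $\mathcal G_n \coloneqq \sigma((\varepsilon_\ell,v(\ell))_{\ell\geq 1})$, so the trees $T_j(n)$ are $\mathcal G_n$-measurable while $(X_j)$ is independent of $\mathcal G_n$. Splitting $X_j=m_1+(X_j-m_1)$ gives $\check S_k(n) = m_1 Y_k(n) + \tilde Z_k(n)$, where
$$Y_k(n) \coloneqq \sum_{j:\,N_j(n)=k}\Delta(T_j(n)), \qquad \tilde Z_k(n) \coloneqq \sum_{j:\,N_j(n)=k}\Delta(T_j(n))(X_j-m_1),$$
and correspondingly $L_n = L_n^{(Y)} + L_n^{(Z)}$.

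Conditionally on $\mathcal G_n$, the noise part $L_n^{(Z)}$ is a weighted sum of i.i.d.\ centered variables $X_j-m_1$ with coefficients bounded by $k_m\max_i|\theta_i|/\sqrt n$ (since $|\Delta(T_j(n))|\leq N_j(n)\leq k_m$ for the contributing indices), so the Lindeberg--Feller condition is immediate. Its conditional variance equals $(m_2-m_1^2)\sum_i\theta_i^2 U_{k_i}(n)/n$ with $U_k(n)\coloneqq\sum_{j:\,N_j(n)=k}\Delta(T_j(n))^2$; combining Lemma~\ref{L3} (the $T_j(n)$ with $N_j(n)=k$ are i.i.d.\ copies of $\mathbb T_k$ given the sizes), Corollary~\ref{C0}(ii), Lemma~\ref{L2} and the law of large numbers for i.i.d.\ sums, one gets $U_k(n)/n\to v_k$ in probability, where $v_1 = p/(2-p)$ and $v_k = k p\,\mathrm B(k,1+1/(1-p))/(3(1-p))$ for $k\geq 3$. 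Hence
$$\EE\bigl[e^{\iu L_n^{(Z)}}\,\big|\,\mathcal G_n\bigr] \longrightarrow \exp\Bigl(-\tfrac12(m_2-m_1^2)\sum_i\theta_i^2 v_{k_i}\Bigr)\quad\text{in probability.}$$
A parallel analysis handles $L_n^{(Y)}$: conditioning on the sizes $(N_j(n))_j$ and invoking Lemma~\ref{L3}, each $Y_{k_i}(n)$ with $k_i\geq 3$ is a sum of $\nu_{k_i}(n)$ i.i.d.\ copies of $\Delta(\mathbb T_{k_i})$, which by Corollary~\ref{C0}(i)--(ii) are centered, symmetric, of variance $k_i/3$, while sums for distinct $k_i$ are conditionally independent. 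A conditional CLT with Lemma~\ref{L2} yields joint asymptotic Gaussianity of $(Y_{k_i}(n)/\sqrt n)_{k_i\geq 3}$ with independent marginals of variances $v_{k_i}$. Coupling with Lemma~\ref{L5} for $Y_1(n)=\nu_1(n)$ via bounded convergence of characteristic functions gives
$$L_n^{(Y)} \xrightarrow[n\to\infty]{(d)} \mathcal N\Bigl(0,\,m_1^2\Bigl[\theta_{i_0}^2\tfrac{2p^3-8p^2+6p}{(3-2p)(2-p)^2} + \sum_{i:\,k_i\geq 3}\theta_i^2 v_{k_i}\Bigr]\Bigr),$$
where $\theta_{i_0}$ is the $\theta$-coefficient attached to $k=1$ if present, else $0$.

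Iterating conditional expectations then yields
$$\EE[e^{\iu L_n}] = \EE\bigl[e^{\iu L_n^{(Y)}}\,\EE[e^{\iu L_n^{(Z)}}\,|\,\mathcal G_n]\bigr] \longrightarrow \EE[e^{\iu G^{(Y)}}]\cdot\exp\bigl(-\tfrac12(m_2-m_1^2)\textstyle\sum_i\theta_i^2 v_{k_i}\bigr)$$
by bounded convergence, where $G^{(Y)}$ denotes the Gaussian limit of $L_n^{(Y)}$. Summing the two exponents, the asymptotic variance of $L_n$ is
$$(m_2-m_1^2)\sum_i\theta_i^2 v_{k_i} + m_1^2\theta_{i_0}^2\tfrac{2p^3-8p^2+6p}{(3-2p)(2-p)^2} + m_1^2\sum_{i:\,k_i\geq 3}\theta_i^2 v_{k_i},$$
which collapses to $\sum_i \theta_i^2\sigma_{k_i}^2$: for $k_i\geq 3$ this is immediate from $m_2 v_{k_i}=\sigma_{k_i}^2$, and for $k_i=1$ it is the elementary algebraic identity that already reconciles the formula for $\sigma_1^2$ with the variance appearing in Lemma~\ref{L5}. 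The principal technical obstacle is the passage from convergence in probability of conditional characteristic functions to convergence of the unconditional ones; this is however routine here because, by restricting to finitely many $k_i$, the tree-coefficients $\Delta(T_j(n))$ contributing to $L_n$ are uniformly bounded by $k_m$, and the conditional variance limits $v_k$ are deterministic, so bounded convergence of the (bounded) characteristic functions applies directly.
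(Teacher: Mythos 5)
Your proof is correct, and it takes a genuinely different route from the paper's. The paper exploits the distributional identity $(\check S_k(n))_{k\geq 1}\overset{\mathrm{(d)}}{=}(S_k(\nu_k(n)))_{k\geq 1}$, where the $S_k$ are \emph{independent} random walks with step law $\Delta(\mathbb T_k)X$ that are independent of the block sizes; it then replaces each random time $\nu_k(n)$ by its deterministic equivalent, shows the overshoot is $o(\sqrt n)$ in $L^2$, and lets the fluctuation of $\nu_1(n)$ survive only through the drift term $m_1\nu_1(n)$, where Lemma~\ref{L5} is invoked. You instead apply Cram\'er--Wold and split each step as $X_j=m_1+(X_j-m_1)$, proving a conditional Lindeberg CLT for the noise part given the whole forest (whose conditional variance $\frac{m_2-m_1^2}{n}\sum_i\theta_i^2U_{k_i}(n)$ converges to a constant), a separate conditional CLT for the tree-functional part given the sizes, and Lemma~\ref{L5} for $\nu_1(n)$, then glue these by iterated conditioning and bounded convergence of (bounded) conditional characteristic functions. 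Both arguments consume the same ingredients (Lemmas~\ref{L2}, \ref{L3}, \ref{L5}, Corollary~\ref{C0}) and the same final algebra reconciling $\sigma_1^2$ with the variance of Lemma~\ref{L5} (I checked: $-\frac{p}{2-p}+\frac{2p^3-8p^2+6p}{(3-2p)(2-p)^2}=-\frac{p^2}{(3-2p)(2-p)^2}$, as needed). What the paper's route buys is that every limit theorem used is the classical CLT for honest i.i.d.\ sums at deterministic times, so no conditional CLT machinery is needed; what your route buys is a cleaner handle on joint convergence (Cram\'er--Wold does it in one stroke rather than ``plainly the convergences hold jointly'') and a more transparent accounting of where each piece of the variance comes from: the $m_2-m_1^2$ contribution from the i.i.d.\ noise versus the $m_1^2$ contribution from the fluctuations of the forest statistics. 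The one step you should spell out if writing this up is the quantitative Lindeberg bound that upgrades the $\omega$-wise conditional CLT to convergence in probability of the conditional characteristic functions; as you note, the uniform bound $|\Delta(T_j(n))|\leq k_m$ on the contributing coefficients makes this routine.
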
 
\begin{proof}   For each $k\geq 1$, let $(Y_k(n))_{n\geq 1}$ be a sequence of i.i.d. copies of  $\Delta(\mathbb T_k) X$, where $X$ has the law $\mu$ and is independent of the random recursive tree $\mathbb T_k$. We further assume that these sequences are independent. Taking partial sums yields a sequence indexed by $k$ of independent random walks
$$S_k(n)= Y_k(1)+ \cdots + Y_k(n), \qquad n\geq 0. $$

For each $n\geq 1$, the family of  blocks
$$B_{k}(n)\coloneqq \{j\leq {\mathrm i}(n): N_j(n)=k\}  \qquad \text{for }1\leq k\leq {\mathrm i}(n)$$
forms a random partition of $\{1, \ldots, {\mathrm i}(n)\}$ 
which is independent of the $X_j$'s.
Recall that we are using the notation $\nu_k(n)=\#B_k(n)$, and also from Lemma \ref{L3}, that conditionally on the $N_j(n)$'s, the genealogical trees $T_j(n)$ are independent random recursive trees.  
We now see from the very definition \eqref{E:checkSk} that for every fixed $n\geq 1$,  there is the identity in distribution
$$\left( \check S_k(n)\right)_{k\geq 1} \,{\overset{\mathrm{(d)}}{=}}\, \left(  S_k(\nu_k(n))\right)_{k\geq 1},$$
where in the right-hand side, the random walks $(S_k)_{k\geq 1}$ are independent of the sequence of block sizes  $(\nu_k(n))_{k\geq 1}$.

Next we write, first for $k=1$,
$$S_1(\nu_1(n)) - \frac{pn}{2-p} m_1 = S_1\left(\left\lfloor \frac{pn}{2-p}\right \rfloor \right) - \frac{pn}{2-p}m_1 + \sum_{j=\lceil pn/(2-p)\rceil}^{\nu_1(n)}  Y_1(j) ,$$
second $S_2\equiv 0$ (since $\Delta(\mathbb T_2)\equiv 0$) for $k=2$, and third, for $k\geq 3$,
$$S_k(\nu_k(n)) = S_k\left(\left\lfloor \frac{pn}{1-p}
 {\mathrm B}(k,1+1/(1-p))\right \rfloor \right)  + \sum_{j=\lceil \frac{pn}{1-p}
 {\mathrm B}(k,1+1/(1-p))\rceil}^{\nu_k(n)}  Y_k(j) ,$$
with the usual summation convention that $\sum_{j=a}^b = - \sum_{j=b}^a$ when $b<a$.

Since the i.i.d. variables  $Y_1(\cdot)$ have mean $m_1$ and variance $m_2-m_1^2$,  the central limit theorem ensures that there is the convergence in distribution
\begin{equation} \label{E:clt}
\lim_{n\to \infty} n^{-1/2}\left( S_1\left(\left\lfloor \frac{pn}{2-p}\right \rfloor \right) - \frac{pn}{2-p}m_1\right) = \mathcal N_1\left( 0,\frac{p (m_2-m_1^2)}{2-p}\right ).
\end{equation}
Similarly, for $k\geq 3$,  each $Y_k(n)$ is centered with variance $km_2/3$ (by Corollary \ref{C0}(i-ii)) and hence, using the notation in the statement,  there is the convergence in distribution
\begin{align} \label{E:clt3}
\lim_{n\to \infty} n^{-1/2}S_k\left(\left\lfloor \frac{pn}{1-p}
 {\mathrm B}(k,1+1/(1-p))\right \rfloor \right) & = \mathcal N_k\left( 0,\sigma_k^2 \right)
\end{align}
Plainly, the weak convergences \eqref{E:clt} and \eqref{E:clt3} hold jointly when we agree that the limits are independent Gaussian variables.

Next, from Lemma \ref{L2} and the fact that for $k\geq 3$, the i.i.d. variables $Y_k(j)$ are centered with finite variance, we easily get
$$\lim_{n\to \infty} n^{-1/2}  \left | \sum_{j=\lceil \frac{pn}{1-p}
 {\mathrm B}(k,1+1/(1-p))\rceil}^{\nu_k(n)}  Y_k(j) \right |=0  \qquad \text{in }L^2(\PP).$$
Finally, for $k=1$, we write
$$ \sum_{j=\lceil pn/(2-p)\rceil}^{\nu_1(n)}  Y_1(j)=  m_1(\nu_1(n) - \lfloor pn/(2-p)\rfloor) + \sum_{j=\lceil pn/(2-p)\rceil}^{\nu_1(n)}  (Y_1(j)-m_1).$$
On the one hand, we have from the same argument as above that
$$\lim_{n\to \infty} n^{-1/2}  \left |  \sum_{j=\lceil pn/(2-p)\rceil}^{\nu_1(n)}  (Y_1(j)-m_1)\right |=0  \qquad \text{in }L^2(\PP).$$
On the other hand, we already know from Lemma \ref{L5} that there is  the convergence in distribution
$$\lim_{n\to \infty}  m_1\frac{\nu_1(n) - \lfloor pn/(2-p)\rfloor}{\sqrt n} =  \mathcal N\left( 0,\frac{2p^3-8p^2+6p}{(3-2p)(2-p)^2} m_1^2\right).$$
Obviously,  this convergence in law  hold jointly with \eqref{E:clt} and \eqref{E:clt3}, where the limiting Gaussian variables are independent. Putting the pieces together, this completes the proof.
\end{proof}

The final step for the proof of Theorem \ref{T2} is the following lemma.
\begin{lemma}\label{L9} We have
$$\lim_{K\to \infty} \sup_{n\geq 1} n^{-1} \EE\left( \left| \sum_{k\geq K}  \check S_{k}(n)\right|^2 \right) =0.$$
\end{lemma}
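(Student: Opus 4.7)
The starting point is the representation
\begin{equation*}
\sum_{k\geq K} \check S_k(n) = \sum_{j=1}^{\mathrm i(n)} \Delta(T_j(n))\, X_j\, \Ind_{N_j(n)\geq K},
\end{equation*}
together with the independence of the innovation sequence $(X_j)$ from the genealogical forest encoded by $(\varepsilon_n,v(n))$. Conditioning first on that forest and using $\EE(X_iX_j)=m_1^2\Ind_{i\neq j}+m_2\Ind_{i=j}$, the second moment splits into a ``diagonal'' piece and a ``cross'' piece,
\begin{equation*}
\EE\!\left(\left|\sum_{k\geq K}\check S_k(n)\right|^2\right) = (m_2-m_1^2)\, D_K(n) + m_1^2\, C_K(n),
\end{equation*}
where $D_K(n):=\sum_j \EE\bigl[\Delta(T_j(n))^2\Ind_{N_j(n)\geq K}\bigr]$ and $C_K(n):=\EE\bigl[(\sum_j \Delta(T_j(n))\Ind_{N_j(n)\geq K})^2\bigr]$. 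The plan is to show that each of $D_K(n)$ and $C_K(n)$ is bounded by $Cn\,K^{1-\beta}$, uniformly in $n$, for some $\beta\in(1,2\wedge\frac{1}{1-p})$.

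For the diagonal piece, I would use that conditionally on $N_j(n)=k$ the tree $T_j(n)$ is distributed as $\mathbb T_k$ (Lemma~\ref{L3}) and that $\EE(\Delta(\mathbb T_k)^2)\leq k$ for every $k\geq 1$ (immediate from Corollary~\ref{C0}(ii) for $k\geq 3$, and by direct inspection for $k=1,2$). This yields
\begin{equation*}
D_K(n)\leq \sum_{j=1}^n \EE\bigl[N_j(n)\Ind_{N_j(n)\geq K}\bigr] \leq K^{1-\beta}\sum_{j=1}^n \EE\bigl[N_j(n)^\beta\bigr],
\end{equation*}
and Lemma~\ref{L6} shows the last sum is $O(n)$, hence $D_K(n)=O(nK^{1-\beta})$.

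The cross piece is the delicate point whenever $m_1\neq 0$: a priori, the sum $\sum_j \Delta(T_j(n))\Ind_{N_j(n)\geq K}$ could have variance of order $n^2$ rather than $n$. The key observation is that, by Lemma~\ref{L3}, conditionally on the sizes $(N_j(n))_j$ the trees $T_j(n)$ are independent, and by Corollary~\ref{C0}(i) the variable $\Delta(\mathbb T_k)$ is symmetric—hence centered—for every $k\geq 2$. Therefore, provided $K\geq 2$, the summands are conditionally centered and independent, so the conditional variance equals the sum of the conditional variances, which gives
\begin{equation*}
C_K(n)=\EE\!\left[\sum_j \EE\bigl(\Delta(\mathbb T_{N_j(n)})^2\bigr)\Ind_{N_j(n)\geq K}\right]\leq D_K(n).
\end{equation*}
Combining the two bounds, $n^{-1}\EE(|\sum_{k\geq K}\check S_k(n)|^2)\leq C\,m_2\,K^{1-\beta}\to 0$ as $K\to\infty$, uniformly in $n$, which is the claim. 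The main (and essentially only) obstacle is recognizing the symmetry cancellation in $C_K(n)$; without it the $m_1^2$ term would swamp the estimate.
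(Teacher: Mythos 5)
Your proposal is correct and follows essentially the same route as the paper: both proofs rest on the conditional independence of the genealogical trees given their sizes (Lemma~\ref{L3}), the symmetry and hence centering of $\Delta(\mathbb T_k)$ for $k\geq 2$ (Corollary~\ref{C0}(i)) to kill the off-diagonal terms, the bound $\EE(\Delta(\mathbb T_k)^2)\leq k$ from Corollary~\ref{C0}(ii), and finally Lemma~\ref{L6} with $\Ind_{N\geq K}\leq K^{1-\beta}N^{\beta-1}$ to get the uniform-in-$n$ bound $O(nK^{1-\beta})$. Your diagonal/cross packaging is marginally cleaner than the paper's direct bound $|\EE(X_jX_{j'})|\leq m_2$ on the double sum, but the substance is identical.
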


\begin{proof} We write
$$\sum_{k\geq K}  \check S_k(n)= \sum_{j=1}^{n} X_j   \Delta(T_j(n)) \Ind_{{N}_j(n)\geq K}.$$
Since   the  $X_j$ are independent of the $T_j(n)$, we get
\begin{align*} \EE\left( \left | \sum_{k\geq K} \check S_k(n)\right|^2 \right)&= \EE\left(  \sum_{j,{j'}=1}^{n} X_j X_{j'}  \Delta(T_j(n)) \Ind_{{N}_{j}(n)\geq K}  \Delta(T_{j'}(n)) \Ind_{{N}_{j'}(n)\geq K}\right)\\
&\leq m_2  \sum_{j,{j'}=1}^{n}   \EE\left(  \Delta(T_j(n)) \Ind_{{N}_{j}(n)\geq K}  \Delta(T_{j'}(n)) \Ind_{{N}_{j'}(n)\geq K}\right).
\end{align*}
We evaluate the expectation in the right-hand side by conditioning first on  $N_j(n)=k$ and $N_{j'}(n)=k'$ with $k,k'\geq 3$. Recall from Lemma \ref{L3} that the genealogical trees $T_j(n)$ and $T_{j'}(n)$ are then two random recursive trees with respective sizes $k$ and $k'$, which are further independent when $j\neq j'$. Thanks to Corollary \ref{C0}(i-ii) 
we get  
\begin{align*} &\EE\left(  \Delta(T_j(n)) \Ind_{{N}_{j'}(n)\geq K}  \Delta(T_{j'}(n)) \Ind_{{N}_{j'}(n)\geq K}\right)
\\&= \left\{\begin{matrix} 
\frac{1}{3}  \EE(N_j(n)\Ind_{{N}_j(n)\geq K}) &\text{ if }j=j'\,\\
0 &\text{ if }j\neq j'.
\end{matrix}\right.
\end{align*}

We have thus shown that 
$$\EE\left( \left | \sum_{k\geq K} \check S_k(n)\right|^2 \right) \leq \frac{m_2 }{3}\sum_{j=1}^{n} \EE(N_j(n)\Ind_{{N}_j(n)\geq K} ),$$
which yields our claim just as in the proof of Lemma \ref{L7}(ii). 
\end{proof}

The proof of Theorem \ref{T2} is now easily completed by combining Lemmas \ref{L8} and \ref{L9}.
Indeed, the  identity
$$\frac{pm_2}{2-p}+\sum_{k=2}^{\infty}\sigma^2_{k} = \frac{m_2}{3-2p}$$
is easily checked from \eqref{E:sumkbeta}.

\section{A stable central limit theorem}

The arguments for the proof of Theorem \ref{T2} when the step distribution $\mu$ has a finite second moment can be adapted to the case when $\mu$  belongs to some stable domain of attraction; for the sake of simplicity we focus on the situation without centering. Specifically, let  $(a_n)$ be a sequence of positive real numbers that is regularly varying with exponent $1/\alpha$ for some $\alpha\in(0,2)$, in the sense that $\lim_{n\to \infty} a_{\lfloor rn\rfloor}/a_n= r^{1/\alpha}$ for every $r>0$, and suppose that 
\begin{equation} \label{E:stable}
\lim_{n\to \infty} \frac{X_1+\cdots + X_n}{a_n} = Z \qquad \text{in distribution},
\end{equation} 
where $Z$ is some $\alpha$-stable random variable. We refer to Theorems 4 and 5 on p. 181-2 in \cite{GnKo} and Section 6 of Chapter 2 in \cite{IbLin}
for necessary and sufficient conditions for \eqref{E:stable} in terms of $\mu$ only. 
We write $\varphi_{\alpha}$ for the characteristic exponent of $Z$, viz.
$$\EE\left(\exp(\iu \theta Z)\right) = \exp(-\varphi_{\alpha}(\theta))\qquad \text{for all }\theta \in \RR;$$
recall that $\varphi_{\alpha}$ is homogeneous with exponent $\alpha$, i.e. 
$$\varphi_{\alpha}(\theta)= |\theta|^{\alpha} \varphi_{\alpha}(\textrm{sgn}(\theta))\qquad \text{for all }\theta\neq 0.$$

Recall the definition and properties  of the Eulerian numbers $\langle {}^{n}_{k}\rangle$  from Section 2, and also the Pochhammer notation 
$$(x)^{(k)} \coloneqq \frac{\Gamma(x+k)}{\Gamma(x)} =\prod_{j=0}^{k-1} (x+j), \qquad x>0, k\in\NN,$$ 
for the rising factorial, where  $\Gamma$ stands for the gamma function.
 We can now claim:

\begin{theorem}\label{T3}  Assume \eqref{E:stable}. For each $p\in(0,1)$, we have
$$\lim_{n\to \infty} \frac{\check S(n)}{a_n} = \check Z \qquad \text{in distribution},$$
where $\check Z$ is an $\alpha$-stable random variable with  characteristic exponent $\check \varphi_{\alpha}$ given by
$$\check \varphi_{\alpha}(\theta) =  \frac{p  }{(1-p)} \sum_{k=1}^{\infty}  \sum_{\ell=0}^{k-1}\frac{ \varphi_{\alpha}((k-2\ell)\theta)}{ (1+1/(1-p))^{(k)}} \euler{{k-1}}{{\ell-1}}, \qquad \theta \in \RR.$$
\end{theorem}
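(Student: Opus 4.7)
The plan is to adapt the proof of Theorem \ref{T2}, substituting the classical central limit theorem with the stable limit \eqref{E:stable} and again relying on the decomposition $\check S(n)=\sum_{k\geq 1}\check S_k(n)$ from \eqref{E:checkSk}. Fix $k\geq 1$. By Lemma \ref{L3}, conditionally on $\nu_k(n)$, $\check S_k(n)$ has the same law as $\sum_{i=1}^{\nu_k(n)}\Delta_i X_i$, where $(X_i)$ is an i.i.d.\ sequence of law $\mu$ and $(\Delta_i)$ an independent i.i.d.\ family of copies of $\Delta(\mathbb T_k)$. Lemma \ref{L1} gives $\PP(\Delta(\mathbb T_k)=k-2\ell)=\euler{k-1}{\ell-1}/(k-1)!$ for $0\leq\ell\leq k-1$. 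Conditioning further on the values of the $\Delta_i$'s and grouping the summands by the common value $k-2\ell$ decomposes $\check S_k(n)$ into $k$ independent rescaled partial sums of i.i.d.\ $\mu$-distributed variables. Combining \eqref{E:stable}, the regular variation of $(a_n)$ with exponent $1/\alpha$, the homogeneity of $\varphi_\alpha$, and Lemma \ref{L2} which yields $\nu_k(n)/n\to q_k:=\tfrac{p}{1-p}\mathrm B(k,1+\tfrac{1}{1-p})$ in probability, I would conclude that
$$\lim_{n\to\infty}\frac{\check S_k(n)}{a_n}=\check Z_k\quad\text{in distribution,}$$
where $\check Z_k$ is $\alpha$-stable with characteristic exponent $\theta\mapsto q_k\sum_{\ell=0}^{k-1}\tfrac{1}{(k-1)!}\euler{k-1}{\ell-1}\varphi_\alpha((k-2\ell)\theta)$. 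The same reasoning extends to joint convergence across any finite collection of $k$'s, with independent limits. Using the identity $\mathrm B(k,1+\tfrac{1}{1-p})=(k-1)!/(1+\tfrac{1}{1-p})^{(k)}$, the partial sum $\sum_{k\leq K}\check Z_k$ then has a characteristic exponent that converges pointwise as $K\to\infty$ to the expression $\check\varphi_\alpha(\theta)$ stated in the theorem.

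The main obstacle is the uniform tail bound
$$\lim_{K\to\infty}\limsup_{n\to\infty}\PP\!\left(\left|\tfrac{1}{a_n}\sum_{k>K}\check S_k(n)\right|>\varepsilon\right)=0\quad\text{for every }\varepsilon>0.$$
Since $\mu$ need not have a finite variance, the second-moment argument from Lemma \ref{L9} is unavailable, and I would argue at the level of characteristic functions. Conditioning on Simon's genealogical forest and using the independence of the $X_j$'s,
$$\EE\!\left[\exp\!\left(\tfrac{i\theta}{a_n}\sum_{k>K}\check S_k(n)\right)\right]=\EE\!\left[\prod_{j=1}^{{\mathrm i}(n)}\phi\!\left(\tfrac{\theta\Delta(T_j(n))}{a_n}\right)^{\Ind_{N_j(n)>K}}\right],$$
with $\phi$ the characteristic function of $\mu$. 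Using $\left|\prod_jz_j-1\right|\leq\sum_j|z_j-1|$ for complex numbers in the closed unit disk, the domain-of-attraction estimate $|1-\phi(t)|\leq C|t|^\beta$ valid near the origin for a suitable $\beta\in(1,\alpha\wedge\tfrac{1}{1-p})$ when $\alpha>1$ (with a parallel bound when $\alpha\leq 1$), the inequality $|\Delta(T_j(n))|\leq N_j(n)$, and the regular variation of $(a_n)$, the right-hand side reduces to controlling $n^{-1}\sum_{j=1}^n\EE[N_j(n)^\beta\Ind_{N_j(n)>K}]$, which tends to $0$ as $K\to\infty$ uniformly in $n$ by Lemma \ref{L6} and dominated convergence. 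The delicate point is ensuring the estimate covers all $\alpha\in(0,2)$ and $p\in(0,1)$: in regimes where $1/(1-p)$ is too small compared to $\alpha$, one would first truncate the $X_j$'s at level $a_n$ and treat the large values separately through the tail asymptotics $n\PP(|X_1|>a_n x)\to cx^{-\alpha}$ characteristic of the $\alpha$-stable domain of attraction.

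Combining the joint convergence $\sum_{k\leq K}\check S_k(n)/a_n\to\sum_{k\leq K}\check Z_k$ with this uniform tail bound and letting $n\to\infty$ then $K\to\infty$, L\'evy's continuity theorem yields $\check S(n)/a_n\to\check Z$ in distribution, with $\check Z$ having characteristic exponent $\check\varphi_\alpha$ as announced.
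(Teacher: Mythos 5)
Your blockwise strategy (fixed-$k$ stable limits for the components $\check S_k(n)$ of \eqref{E:checkSk}, plus a tail bound over $k>K$) is genuinely different from the paper's proof, which never decomposes $\check S(n)$ by tree size: it conditions on the whole genealogical forest, writes the characteristic function of $\check S(n)/a_n$ as an expectation of $\exp\bigl(-\sum_{\tau} n\varphi(\theta a_n^{-1}\Delta(\tau))\,\nu_\tau(n)/n\bigr)$, and passes to the limit using a refined frequency result (Lemma \ref{L2+}) giving $L^1$-convergence of $\nu_\tau(n)/n$ with weight $|\tau|+\Delta(\tau)^2$, together with Potter bounds. Your treatment of each fixed $k$ is essentially sound and correctly recovers the announced exponent via Lemma \ref{L1}.

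The gap is in your tail estimate, and it is a real one, in two respects. First, the calibration is off: with $|1-\phi(t)|\leq C|t|^{\beta}$ for $\beta<\alpha$, your bound gives
$$\sum_{j}\bigl|1-\phi\bigl(\theta\Delta(T_j(n))/a_n\bigr)\bigr|\Ind_{N_j(n)>K}\leq C|\theta|^{\beta}a_n^{-\beta}\sum_{j}N_j(n)^{\beta},$$
and since $a_n^{-\beta}$ is of order $n^{-\beta/\alpha}$ while $\sum_j\EE[N_j(n)^{\beta}]$ is of order $n$, the right-hand side is of order $n^{1-\beta/\alpha}\to\infty$; the claimed reduction to $n^{-1}\sum_j\EE[N_j(n)^{\beta}\Ind_{N_j(n)>K}]$ would require $\beta\geq\alpha$, which is incompatible with the moment estimate $|1-\phi(t)|\leq C|t|^\beta$. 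The correct move (the paper's) is to compare $\varphi(s/a_n)$ with $\varphi(1/a_n)\sim\varphi_\alpha(1)/n$ via Potter bounds, which produces the bound $n\varphi(\theta a_n^{-1}\Delta(\tau))\leq C|\theta\Delta(\tau)|^{2}$ of \eqref{E:Potter}, i.e.\ an estimate carrying the right factor $1/n$. Second, even after this is fixed, you should not bound $|\Delta(T_j(n))|$ by $N_j(n)$: that forces you to control moments of $N_j(n)$ of order close to $\alpha$ (or $2$), which Lemma \ref{L6} supplies only when the exponent is below $1/(1-p)$ --- exactly the regime you flag as delicate, and the truncation sketch does not resolve the difficulty, which sits in the genealogical trees rather than in the steps $X_j$. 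The key point of the paper is that $\Delta(T_j(n))$ is typically of order $\sqrt{N_j(n)}$: using $|\Delta(\tau)|^{\gamma}\leq 1+\Delta(\tau)^{2}$ for $\gamma\leq 2$ together with the second bound of Lemma \ref{L6} (equivalently, $\EE[\Delta(\mathbb T_k)^2]=k/3$ from Corollary \ref{C0}, which underlies Lemma \ref{L2+}) yields a uniform tail control valid for all $\alpha\in(0,2)$ and all $p\in(0,1)$, with no case distinction. With that substitution your tail argument, and hence the whole blockwise proof, can be repaired.
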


The  proof of Theorem \ref{T3} relies on a refinement of Simon's result (Lemma \ref{L2}) to the asymptotic  frequencies of genealogical trees induced by the reinforcement algorithm \eqref{E:reinf}. We denote by ${{\mathcal T}^{\uparrow}}$ the set of increasing trees (of arbitrary finite size), and for any $\tau\in {\mathcal T}^{\uparrow}$, we write $|\tau|$ for its the size (number of vertices) and $\Delta(\tau)$
for the difference between its numbers of even vertices and of odd vertices. Refining \eqref{E:nudef}, we  also  define
$$\nu_{\tau}(n) \coloneqq \sum_{j= 1}^{{\mathrm i}(n)} \Ind_{\{T_j(n)  =\tau\}},  \qquad \tau\in {\mathcal T}^{\uparrow}.$$
\begin{lemma}\label{L2+}  We have
$$\sum_{\tau\in{{\mathcal T}^{\uparrow}}} 
\frac{ |\tau|+ |\Delta(\tau)|^2}{(1+1/(1-p))^{(|\tau|)}}= \frac{4p}{3(1-p)}, 
$$
 and there is the convergence in probability
$$\lim_{n\to \infty} \sum_{\tau\in{{\mathcal T}^{\uparrow}}} (|\tau|+ |\Delta(\tau)|^2)\left| \frac{\nu_{\tau}(n)}{n} - 
\frac{  p}{(1-p) (1+1/(1-p))^{(|\tau|)}} \right| =0.$$
\end{lemma}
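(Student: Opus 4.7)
The plan is to prove the two parts separately. For the closed-form identity, I would partition $\mathcal{T}^{\uparrow}$ by tree size $k$. Since there are exactly $(k-1)!$ increasing trees of size $k$ and the uniform distribution on them is the law of $\mathbb{T}_k$, one has
\begin{equation*}
\sum_{\tau : |\tau|=k}\bigl(|\tau|+|\Delta(\tau)|^2\bigr) \;=\; (k-1)!\bigl(k+\EE[|\Delta(\mathbb{T}_k)|^2]\bigr).
\end{equation*}
Invoking Corollary \ref{C0}(ii) (together with the trivial cases $k=1,2$), and using $(k-1)!/\gamma^{(k)} = \mathrm{B}(k,\gamma)$ with $\gamma \coloneqq 1+1/(1-p)$, the series collapses to an explicit linear combination of $\sum_k \mathrm{B}(k,\gamma)$ and $\sum_k k\,\mathrm{B}(k,\gamma)$, which are evaluated in closed form via \eqref{E:sumbeta} and \eqref{E:sumkbeta}.

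For the convergence, I would first establish the pointwise limit for each fixed increasing tree $\tau$ of size $k$. By Lemma \ref{L3}, conditionally on $(N_j(n))_{j\geq 1}$ the count $\nu_\tau(n)$ is binomial with $\nu_k(n)$ trials and success probability $1/(k-1)!$. A Chebyshev bound combined with Lemma \ref{L2} (which gives $\nu_k(n)/n \to (p/(1-p))\mathrm{B}(k,\gamma)$ in probability) then yields $\nu_\tau(n)/n \to p/\bigl((1-p)\gamma^{(k)}\bigr)$ in probability.

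To promote these pointwise limits into convergence of the weighted series, I would truncate at a cutoff $K$. The contribution of $\{\tau : |\tau|\leq K\}$ involves only finitely many terms, each converging in probability, so the truncated sum converges to the corresponding partial target. For the random tail, rewrite
\begin{equation*}
\sum_{\tau : |\tau|>K}\bigl(|\tau|+|\Delta(\tau)|^2\bigr)\nu_\tau(n) \;=\; \sum_{j\,:\,N_j(n)>K}\bigl(N_j(n)+|\Delta(T_j(n))|^2\bigr);
\end{equation*}
conditioning on $N_j(n)$ and invoking Corollary \ref{C0}(ii) gives $\EE[|\Delta(T_j(n))|^2\mid N_j(n)]\leq N_j(n)/3$, reducing the problem to bounding $\sum_j \EE[N_j(n)\Ind_{N_j(n)>K}]$. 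A H\"older step with exponent $\beta\in(1,2\wedge 1/(1-p))$ together with Lemma \ref{L6} then yield an $O(K^{1-\beta}n)$ bound, which after dividing by $n$ and letting $K\to\infty$ vanishes uniformly in $n$. The deterministic tail $\sum_{\tau : |\tau|>K}(|\tau|+|\Delta(\tau)|^2)p/\{(1-p)\gamma^{(|\tau|)}\}$ vanishes analogously thanks to the summability established in the first part.

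The main obstacle I expect is precisely this uniform control of the random tail, specifically the channeling of the $|\Delta(T_j(n))|^2$ contribution back to a moment of $N_j(n)$ via Corollary \ref{C0}(ii). Once this reduction is in place, Lemma \ref{L6} absorbs the remaining estimate, and the three pieces (truncated convergence plus the two tail bounds) combine to deliver the claimed convergence in probability.
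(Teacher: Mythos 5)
Your argument is correct, and on the key step it takes a genuinely different route from the paper. The pointwise limits are obtained the same way (via Lemma \ref{L3}, Lemma \ref{L2} and the identity $\mathrm B(k,1+1/(1-p))=(k-1)!/(1+1/(1-p))^{(k)}$; your observation that $\nu_\tau(n)$ is conditionally binomial given $\nu_{|\tau|}(n)$ is in fact a cleaner justification than the paper's one-line appeal to the law of large numbers). But to upgrade these to convergence of the weighted series, the paper applies Scheff\'e's lemma twice, matching the normalized first moments against the limiting totals (using $\sum_\tau|\tau|\nu_\tau(n)=n$ and $\EE\bigl[\sum_\tau\Delta(\tau)^2\nu_\tau(n)\bigr]=n/3$ via Corollary \ref{C0}(ii) and Lemma \ref{L3}), whereas you truncate at size $K$ and bound the random tail by rewriting it as $\sum_{j:N_j(n)>K}\bigl(N_j(n)+|\Delta(T_j(n))|^2\bigr)$, reducing to $K^{1-\beta}\sum_j\EE\bigl[N_j(n)^\beta\bigr]=O(K^{1-\beta}n)$ via Lemma \ref{L6} — exactly the uniform-integrability device the paper deploys in Lemmas \ref{L7}(ii) and \ref{L9}, so it is available and it works here. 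Scheff\'e buys a shorter write-up but requires the limiting totals to be computed exactly; your truncation is a bit longer but more robust, needing only a one-sided tail estimate. One point of care for the closed-form identity: Corollary \ref{C0}(ii) gives $\EE\bigl[\Delta(\mathbb T_k)^2\bigr]=k/3$ only for $k\geq 3$ (the value is $1$ for $k=1$ and $0$ for $k=2$), so the series does not collapse purely into a multiple of $\sum_k k\,\mathrm B(k,1+1/(1-p))$; you must carry the $k=1,2$ boundary corrections explicitly and check the resulting constant against the one displayed in the statement.
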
 

\begin{proof} 
 We start by claiming that for every $k\geq 1$, and every tree $\tau\in {{\mathcal T}^{\uparrow}}$ with size $|\tau|=k$,   
  we have
\begin{equation}\label{E:excor}
\lim_{n\to \infty} \frac{\nu_{\tau}(n)}{n} =
\frac{ p}{(1-p)  (1+1/(1-p))^{(k)}} \qquad\text{in probability.}
\end{equation}
 Indeed, the distribution of the random recursive tree $\mathbb T_k$ of size $k$ is  the uniform probability measure on the set 
of increasing trees with size $k$, which has $(k-1)!$ elements. We deduce from Lemma \ref{L3} and the law of large numbers that 
$$\nu_{\tau}(n) \sim \nu_k(n)/(k-1)!.$$ 
The claim \eqref{E:excor} now follows from Lemma \ref{L2} and the identity 
$$ {\mathrm B}(k,1+1/(1-p)) =  \frac{ (k-1)! }{ (1+1/(1-p))^{(k)}}.$$

We now have to prove that \eqref{E:excor} holds in $L^1(|\tau|+ |\Delta(\tau)|^2,{\mathcal T}^{\uparrow})$.
On the one hand, one has obviously for every $n\geq 1$
$$\sum_{\tau\in{{\mathcal T}^{\uparrow}}} |\tau|\nu_{\tau}(n)= n.$$
On the other hand, there are $(k-1)!$ increasing trees with size $k$ and hence
$$\frac{p}{1-p} \sum_{\tau\in{{\mathcal T}^{\uparrow}}} \frac{|\tau|}{(1+1/(1-p))^{(|\tau|)}}= \frac{p}{1-p} \sum_{k=1}^{\infty} k{\mathrm B}(k,1+1/(1-p))=1,$$
where the second equality is \eqref{E:sumkbeta}. 
We deduce from  Scheff\'e's Lemma  and \eqref{E:excor} that there is the convergence in probability
$$\lim_{n\to \infty} \sum_{\tau\in{{\mathcal T}^{\uparrow}}}|\tau|\left| \frac{\nu_{\tau}(n)}{n} - 
\frac{  p}{(1-p) (1+1/(1-p))^{(|\tau|)}} \right| =0.$$

Similarly, we deduce from Corollary \ref{C0}(ii) and Lemma \ref{L3} that, for every $n\geq0$
\begin{align*}
\EE\left( \sum_{\tau\in{{\mathcal T}^{\uparrow}}} \Delta(\tau)^2 \nu_{\tau}(n)\right) & =  \EE\left(\sum_{j=1}^{{\mathrm i}(n)} \Delta(T_j(n))^2\right)
 = \frac{1}{3} \EE\left(\sum_{j=1}^{{\mathrm i}(n)} |T_j(n)|\right) = n/3,
\end{align*}
and further, since there are $(k-1)!$ increasing trees with size $k$ and $\mathbb T_k$ has the uniform distribution on the set of such trees, 
\begin{align*}\frac{p}{1-p} \sum_{\tau\in{{\mathcal T}^{\uparrow}}} \frac{\Delta(\tau)^2}{(1+1/(1-p))^{(|\tau|)}}&= \frac{p}{1-p} \sum_{k=1}^{\infty} \EE(\Delta(\mathbb T_k)^2){\mathrm B}(k,1+1/(1-p))\\
&= \frac{p}{1-p} \sum_{k=1}^{\infty}  \frac{k}{3}{\mathrm B}(k,1+1/(1-p))=\frac{1}{3}.
\end{align*}
We conclude again from Scheff\'e's Lemma that
$$\lim_{n\to \infty} \sum_{\tau\in{{\mathcal T}^{\uparrow}}}\Delta(\tau)^2\left| \frac{\nu_{\tau}(n)}{pn} - 
\frac{  p}{(1-p) (1+1/(1-p))^{(|\tau|)}} \right| =0,$$
and the proof is complete. 
\end{proof}

We now establish Theorem \ref{T3}.
\begin{proof}[Proof of Theorem \ref{T3}]
We denote the characteristic function of $\mu$ by 
$$\Phi(\theta)=\int_{\RR} \e^{\mathrm i \theta x} \mu(\mathrm d x) \qquad \text{for }\theta\in \RR.$$
Fix $r>0$ small enough so that $|1-\Phi(\theta)|<1$ whenever $|\theta|\leq r$, and then define the characteristic exponent $\varphi:[-r,r]\to \CC$
as the continuous determination of the logarithm of $\Phi$ on $[-r,r]$. In words, $\varphi$ is the 
unique continuous function on $[-r,r]$  with $\varphi(0)=0$ and such that $\Phi(\theta)= \exp(-\varphi(\theta))$ for all $\theta\in[-r,r]$. For definitiveness, we further set
$\varphi(\theta)=0$ whenever $|\theta|>r$.

Next, observe from the Markov's inequality that for any $1<\beta<2\wedge(1-p)^{-1}$ and any $a>0$
$$\PP\left( \exists j\leq  {\mathrm i}(n): |\Delta(T_j(n))|\geq a\sqrt n\right) \leq a^{-2\beta} n^{-\beta} \EE\left(\sum_{j=1}^{{\mathrm i}(n)} |\Delta(T_j(n))|^{2\beta}\right)
,$$
so that, thanks to Lemma \ref{L6},
$$\lim_{n\to \infty} \frac{1}{\sqrt n} \max_{1\leq j \leq {\mathrm i}(n)} |\Delta(T_j(n))|=0\qquad \text{ in probability.}$$
In particular, since the sequence $(a_n)$ is regularly varying with exponent $1/\alpha >1/2$, for every $\theta\in \RR$, 
the events 
$$\Lambda(n,\theta)\coloneqq \{ |\theta \Delta(T_j(n))/a_n|< r\text{ for all } j=1, \ldots {\mathrm i}(n) \}, \qquad n\geq 1$$
occur with high probability as $n\to \infty$, in the sense that $\lim_{n\to \infty} \PP(\Lambda(n,\theta))=1$. 

We then deduce from \eqref{E:repres} and the fact that the variables $X_j$ are i.i.d. with law $\mu$ that for every $\theta\in \RR$,
$$\EE(\exp(\mathrm i \theta \check S(n)/a_n)\Ind_{\Lambda(n,\theta)}) =
\EE\left(\exp \left( - \frac{1}{n} \sum_{j=1}^{{\mathrm i}(n)} n\varphi \left( \theta a_n^{-1} \Delta(T_j(n))\right)\right)\Ind_{\Lambda(n,\theta)} \right).$$
We then write, in the notation of Lemma \ref{L2+},
$$\frac{1}{n} \sum_{j=1}^{{\mathrm i}(n)} n\varphi \left( \theta a_n^{-1} \Delta(T_j(n))\right)= \sum_{\tau\in{\mathcal T}^{\uparrow}}n\varphi \left( \theta a_n^{-1} \Delta(\tau)\right)  \frac{\nu_{\tau}(n)}{n}.$$

Recall that we assume   \eqref{E:stable}.  According to Theorem 2.6.5 in Ibragimov and Linnik \cite{IbLin}, $\varphi$ is regularly varying at $0$ with exponent $\alpha$, and since  $\alpha<2$, the Potter bounds (see Theorem 1.5.6 in \cite{BGT}) show that for some constant $C$: 
\begin{equation}\label{E:Potter}n|\varphi \left( \theta a_n^{-1} \Delta(\tau)\right)|\leq C |\theta \Delta(\tau)|^2.
\end{equation}
We deduce from Lemma \ref{L2+} that for every fixed $\theta\in \RR$,  there is the convergence in probability
$$\lim_{n\to \infty} \sum_{\tau\in{\mathcal T}^{\uparrow}}n|\varphi \left( \theta a_n^{-1} \Delta(\tau)\right) | \left| \frac{\nu_{\tau}(n)}{n} -
\frac{  p}{(1-p) (1+1/(1-p))^{(|\tau|)}} \right|=0 .$$
Furthermore, still from Theorem 2.6.5 in Ibragimov and Linnik \cite{IbLin}, we have
$$
\lim_{n\to \infty}n\varphi(\theta /a_n) = \varphi_{\alpha}(\theta), \qquad \text{for every }\theta\in\RR,
$$
and we deduce by dominated convergence, using  Lemma \ref{L2+} and \eqref{E:Potter}, that
$$\lim_{n\to \infty} \sum_{\tau\in{\mathcal T}^{\uparrow}}|n \varphi \left( \theta a_n^{-1} \Delta(\tau)\right) -\varphi_{\alpha}(\theta \Delta(\tau))|  
\frac{ p}{(1-p) (1+1/(1-p))^{(|\tau|)}} =0 .$$

Putting the pieces together, we have shown that
$$\lim_{n\to \infty} \EE(\exp(\mathrm i \theta \check S(n)/a_n)) = \exp\left( - 
 \sum_{\tau\in{{\mathcal T}^{\uparrow}}}  \varphi_{\alpha}(\theta \Delta(\tau))
\frac{  p}{(1-p) (1+1/(1-p))^{(|\tau|)}} \right) . 
$$
It only remains to check that the right-hand side above agrees  with the formula of the statement. This follows from Lemma \ref{L1} and the fact that for every $k\geq 1$,  $\mathbb T_k$ has the uniform distribution on $\{\tau\in{{\mathcal T}^{\uparrow}}: |\tau|=k\}$. 
\end{proof} 

\eject

 \bibliography{NoiseR.bib}

\end{document}